\newtheorem{thm}{Theorem}[section]
\newtheorem{prop}[thm]{Proposition}
\newtheorem{coro}[thm]{Corollary}
\newtheorem{defn}[thm]{Definition}
\newtheorem{rmk}[thm]{Remark}
\begin{document}
	\title[Factorization of quasitriangular structures of smash biproduct...]{Factorization of quasitriangular structures of smash biproduct bialgebras}
	
	\author[F. Wang]{Fujun Wang}
	\address{Shanghai Center for Mathematical Sciences, Fudan University, Shanghai 200438, China}
	\email{18110840005@fudan.edu.cn}

	\date{}

	\begin{abstract}
		In this paper, we consider the factorization and reconstruction of quasitriangular structures of smash biproduct bialgebras. Let $A{_\tau\times_\sigma}B$ be a smash biproduct bialgebra. Under condition that $\sigma$ is right conormal, we prove that $A{_\tau\times_\sigma}B$ is quasitriangular if and only if there exists a set of normalized elements $W\in B\otimes B$, $X\in A\otimes B$, $Y\in B\otimes A$ and $Z\in A\otimes A$ satisfying a certain series of identities. In this case, the quasitriangular structure of $A{_\tau\times_\sigma}B$ is given as $\sum \tensor{Z}{^1_{\tau_1\tau_2}}\tensor{\bar{X}}{^1_{\tau_3}}X^1\otimes W^1Y^1\otimes Z^2\tensor{Y}{^2_{\sigma_1\sigma_2}}\epsilon_B(1_{B\tau_1\sigma_2}\tensor{\bar{X}}{^2_{\sigma_1}})\otimes1_{B\tau_2}1_{B\tau_3}X^2W^2$. Our result generalizes the similar results for Radford's biproduct Hopf algebras studied by L. Zhao and W. Zhao, for bicrossproduct Hopf algebras studied by Zhao, Wang and Jiao, and for the dual Hopf algebras of double cross product Hopf algebras studied by Jiao.
	\end{abstract}
	
	\keywords{Smash biproduct; Hopf algebra; Quasitriangular; Factorization}
	
	\subjclass[2020]{16S40, 16T10}

	\maketitle

	\section{Introduction}
	
	Quasitriangular Hopf algebras have been studied for decades since the concept was introduced by Drinfeld. It has wide applications in various fields such as solutions to quantum Yang-Baxter equations \cite{D,Ma}, invariants of knots \cite{R2,K&R,T}, classification of finite-dimensional Hopf algebras \cite{E&G,E&G2,G}, etc.
	
	Smash biproducts are considered independently by Bespalov and Dranbant in \cite{B&D,B&D2}, by Schauenburg in \cite{Sch} and by Caenepeel, Ion, Militaru and Zhu in \cite{C&I&M&Z}. This construction provides a universal way to construct bialgebras (or Hopf algebras) in terms of two linear maps of two algebras which meanwhile are also coalgebras. Moreover smash biproducts bear a characterization by using projections and injections combining the universal properties of smash products and smash coproducts. Based on this characterization, Radford's biproducts, bicrossproducts, and double cross products can be described as smash biproducts of different types.
	
	It's natural to ask when a smash biproduct Hopf algebra, say $A{_\tau\times_\sigma}B$, admits a quasitriangular structure, especially with respect to $A$ and $B$. From this point, the factorization of quasitriangular structures has been investigated for different constructions of Hopf algebras. In \cite{Z&W&J}, Zhao, Wang and Jiao first considered the case of bicrossproduct Hopf algebras and gave an equivalent condition for a bicrossproduct Hopf algebra to be quasitriangular in \cite[Theorem 2.7]{Z&W&J}. Later in \cite{J} Jiao considered the case of a class of \textit{T}-smash product Hopf algebras. In notation of $A{_\tau\times_\sigma}B$, they are smash biproduct Hopf algebras for $\sigma=\textit{T}$ and $\tau$ the usual flipping map. Under condition of \textit{T} being right conormal (cf. \cite[Definition 2.3]{J}), an equivalent condition was given to make a \textit{T}-smash product Hopf algebra quasitriangular in \cite[Theorem 3.7]{J}. In 2007, L. Zhao and W. Zhao considered the case for Radford's biproduct Hopf algebras in \cite{Z&Z}. For this important construction, an equivalent condition to make a Radford's biproduct Hopf algebra quasitriangular was given in \cite[Theorem 1]{Z&Z}. Comparing to \cite{J} in a dual way, $\omega$-smash coproduct Hopf algebras are considered in \cite{J2}. These Hopf algebras are the case of $\sigma$ the usual flipping map and $\tau=\omega$. Also an equivalent condition was given in \cite[Theorem 3.1]{J2} to make an $\omega$-smash coproduct Hopf algebra quasitriangular. In 2010, Ma and Wang considered the twisted tensor biproduct Hopf algebras in \cite{M&W}. Under favorable conditions (cf. the statements in the beginning of Section 3 of \cite{M&W}), an equivalent condition was given in \cite[Theorem 3.13]{M&W}. For these distinct constructions, one could see there is much similarity for the reconstruction formulas in the theorems mentioned above, mainly on the positions of those factor elements. This motivates us to develop a general theory.
	
	For a quasitriangular structure $\mathcal{R}=\sum\mathcal{R}^1\otimes \mathcal{R}^2\otimes \mathcal{R}^3\otimes \mathcal{R}^4\in(A{_\tau\times_\sigma}B)\otimes(A{_\tau\times_\sigma}B)$ of a smash biproduct bialgebra $A{_\tau\times_\sigma}B$, let $W\in B\otimes B$, $X\in A\otimes B$, $Y\in B\otimes A$ and $Z\in A\otimes A$ be the  factor elements of $\mathcal{R}$. We show in Proposition \ref{thm:3.4} that if $\sigma$ is right conormal, then $\mathcal{R}$ can be expressed in terms of the factor elements. The expression is
	\begin{align*}
		\mathcal{R}=\sum \tensor{Z}{^1_{\tau_1\tau_2}}\tensor{\bar{X}}{^1_{\tau_3}}X^1\otimes W^1Y^1\otimes Z^2\tensor{Y}{^2_{\sigma_1\sigma_2}}\epsilon_B(1_{B\tau_1\sigma_2}\tensor{\bar{X}}{^2_{\sigma_1}})\otimes1_{B\tau_2}1_{B\tau_3}X^2W^2.
	\end{align*}
	Then in Theorem \ref{thm:3.6}, we prove that for some normalized elements $W\in B\otimes B$, $X\in A\otimes B$, $Y\in B\otimes A$ and $Z\in A\otimes A$, the reconstruction formula above will be a quasitriangular structure of $A{_\tau\times_\sigma}B$ supposing that these $W$, $X$, $Y$ and $Z$ satisfy a series of identities listed in Proposition \ref{thm:3.5}. As a summary, we get an equivalence theorem for a smash biproduct bialgebra $A{_\tau\times_\sigma}B$ for which $\sigma$ is right conormal to be quasitriangular. Our main result is as follows (see Theorem \ref{thm:3.7}):
	\begin{itemize}[leftmargin=12pt]
		\item Let $A{_\tau\times_\sigma}B$ be a smash biproduct bialgebra. Under condition that $\sigma$ is right conormal, $A{_\tau\times_\sigma}B$ admits a quasitriangular structure if and only if there exist normalized elements $W=\sum W^1\otimes W^2\in B\otimes B$, $X=\sum X^1\otimes X^2\in A\otimes B$, $Y=\sum Y^1\otimes Y^2\in B\otimes A$ and $Z=\sum Z^1\otimes Z^2\in A\otimes A$ satisfying $C1)-C19)$ (see Proposition \ref{thm:3.5}). In this case, the quasitriangular structure is given as $\alpha=\sum \tensor{Z}{^1_{\tau_1\tau_2}}\tensor{\bar{X}}{^1_{\tau_3}}X^1\otimes W^1Y^1\otimes Z^2\tensor{Y}{^2_{\sigma_1\sigma_2}}\epsilon_B(1_{B\tau_1\sigma_2}\tensor{\bar{X}}{^2_{\sigma_1}})\otimes1_{B\tau_2}1_{B\tau_3}X^2W^2$.
	\end{itemize}
	
	This paper is organized as follows. In Section \ref{Sec:2}, we recall the definitions and notations of quasitriangular structures of bialgebras and smash biproducts. Also we recall the definitions of normal and conormal conditions. Some useful results about smash biproducts are also presented. In Section \ref{Sec:3} we first recall the definition of factor elements for a quasitriangular structure. Then we establish the factorization theory of smash biproduct bialgebras $A{_\tau\times_\sigma}B$ for which $\sigma$ is right conormal. In Section \ref{Sec:4} we study smash biproduct bialgebras which satisfy an extra normal or conormal condition besides the original right conormal condition. We get corollaries \ref{thm:4.1}, \ref{thm:4.2} and \ref{thm:4.3}. When the smash biproduct bialgebras are moreover Hopf algebras, these corollaries turn out to be equivalent to the results in \cite{M&W,Z&W&J,J2}, respectively. Thus our theory unifies these distinct theories.

	\section{Preliminaries}\label{Sec:2}
	
	\subsection{Quasitriangular bialgebras}
	In this paper, all objects and morphisms are assumed to be over a field $\Bbbk$.
	
	An algebra $(A,m_A,\eta_A)$ is a $\Bbbk$-vector space $A$ equipped with an associative multiplication $m_A:A\otimes A\rightarrow A\enspace(\sum_ia_i\otimes b_i\mapsto\sum_ia_ib_i)$ and a unit $\eta_A:\Bbbk\rightarrow A\enspace(k\mapsto k1_A)$. If no risk of confusion arises, it will be simply denoted by $A$. An algebra map $f:A\rightarrow A'$ is a $\Bbbk$-linear map of two algebras which preserves the multiplications and units. Dually a coalgebra $(C,\varDelta_C,\epsilon_C)$ is a $\Bbbk$-vector space $C$ equipped with a coassociative comultiplication $\varDelta_C:C\rightarrow C\otimes C\enspace(c\mapsto\sum c_{(1)}\otimes c_{(2)})$ and a counit $\epsilon_C:C\rightarrow\Bbbk \enspace(c\mapsto \epsilon_C(c))$. Here we use Sweedler's notation for comultiplications. Similarly it will be simply denoted by $C$. A coalgebra map $g:C\rightarrow C'$ is a $\Bbbk$-linear map of two coalgebras which preserves the comultiplications and counits.
	
	An algebra $A$ is called counital if it's equipped with a map $f:A\rightarrow\Bbbk$ such that $f$ is an algebra map. A coalgebra $C$ is called unital if it's equipped with a map $g:\Bbbk\rightarrow C$ such that $g$ is a coalgebra map.
	
	A bialgebra $(B,m_B,\eta_B,\varDelta_B,\epsilon_B)$ is an algebra $(B,m_B,\eta_B)$ and also a coalgebra $(B,\varDelta_B,\epsilon_B)$ such that both $m_B$ and $\eta_B$ are coalgebra maps, or equivalently both $\varDelta_B$ and $\epsilon_B$ are algebra maps. For simplicity this bialgebra will be denoted by $B$. Of course a bialgebra $B$ is a counital algebra via $\epsilon_B$ and a unital coalgebra via $\eta_B$. For more details of bialgebras, we suggest \cite{S,M,R}.
	
	We recall the definition of quasitriangular structures of bialgebras as in \cite{R}. Note that in \cite{M} and lots of other literature, quasitriangular structures are usually considered for Hopf algebras.
	\begin{defn}
		[\protect{\cite[Definition 12.2.3]{R}}] A quasitriangular bialgebra is a pair $(H,\mathcal{R})$ where $H$ is a bialgebra and $\mathcal{R}=\sum\mathcal{R}^1\otimes\mathcal{R}^2\in H\otimes H$ such that the following conditions hold:
		\begin{align*}
			&(QT1)\quad(\epsilon_H\otimes id_H)(\mathcal{R})=1_H=(id_H\otimes \epsilon_H)(\mathcal{R}),\hspace{16em}\\
			&(QT2)\quad(\varDelta_H\otimes id_H)(\mathcal{R})=\mathcal{R}_{13}\mathcal{R}_{23},\\
			&(QT3)\quad(id_H\otimes\varDelta_H)(\mathcal{R})=\mathcal{R}_{13}\mathcal{R}_{12},\\
			&(QT4)\quad\mathcal{R}\varDelta_H(h)=\varDelta^{cop}_H(h)\mathcal{R}\quad\forall h\in H.
		\end{align*}
	In this case, $H$ is called quasitriangular and $\mathcal{R}$ is a quasitriangular structure of $H$. Moreover if $H$ is a Hopf algebra, $(H,\mathcal{R})$ is called a quasitriangular Hopf algebra.
	\end{defn}
	\begin{rmk}
		(i) According to \cite[Theorem 12.2.8]{R}, the definition of quasitriangular Hopf algebras above is equivalent to \cite[Definition 10.1.5]{M}. The latter is the usual definition of quasitriangular Hopf algebras.\\
		(ii) There are many quasitriangular bialgebras that are not Hopf algebras. For example, let $S$ be a monoid, then $\Bbbk S$ is an algebra. With the comultiplication $\varDelta(s)=s\otimes s$ and counit $\epsilon(s)=1$ for $\forall s\in S$, $(\Bbbk S, \varDelta, \epsilon)$ is a bialgebra. But it's not a Hopf algebra since $S$ is not a group. Because $\Bbbk S$ is cocommutative, $\mathcal{R}=1_S\otimes1_S$ is always a quasitriangular structure of the bialgebra $\Bbbk S$. Specially consider $S=(\mathbb{Z}_6,\cdot)$. Then $\mathcal{R}=\underline{3}\otimes\underline{3}$ is a non-trivial quasitriangular structure and not invertible. Here $\underline{m}$ represents the element of $\mathbb{Z}_6$. 
	\end{rmk}

	\subsection{Smash biproduct bialgebras}
	Let $A$ and $B$ be two algebras. Suppose that there is a $\Bbbk$-linear map $\sigma:B\otimes A\rightarrow A\otimes B\quad(b\otimes a\mapsto\sum a_{\sigma}\otimes b_{\sigma})$. Then on $A\otimes B$, we can consider the multiplication
	\begin{align*}
		\bm{m}=(m_A\otimes m_B)\circ(id_A\otimes\sigma\otimes id_B):A\otimes B\otimes A\otimes B&\rightarrow A\otimes B\\
		a\otimes b\otimes a'\otimes b'&\mapsto\sum aa'_{\sigma}\otimes b_{\sigma}b'.
	\end{align*}
	If this multiplication is associative and $\eta_A\otimes\eta_B$ is a unit, we call this algebra a smash product of $A$ and $B$, denoted by $A\#_\sigma B$. For $A\#_\sigma B$, we have the following identities:
	\begin{align*}
		&\sum1_{A\sigma}\otimes b_{\sigma}=1_{A}\otimes b,\\
		&\sum a_\sigma\otimes 1_{B\sigma}=a\otimes1_B,\\
		(1.1)\quad&\sum(aa')_\sigma\otimes b_\sigma=\sum a_{\sigma_1}a'_{\sigma_2}\otimes b_{\sigma_1\sigma_2},\\
		(1.2)\quad&\sum a_\sigma\otimes (bb')_\sigma=\sum a_{\sigma_1\sigma_2}\otimes b_{\sigma_2}b'_{\sigma_1}
	\end{align*}
	for $\forall a,a'\in A$, $b,b'\in B$, where $\sigma_i$ is a copy of $\sigma$ for $i=1,2$ and so on.
	
	Dually, let $C$ and $D$ be two coalgebras. Suppose that there is a $\Bbbk$-linear map $\tau:C\otimes D\rightarrow D\otimes C\quad(c\otimes d\mapsto\sum d_{\tau}\otimes c_{\tau})$. Then on $C\otimes D$, we can consider the comultiplication
	\begin{align*}
		\bm{\varDelta}=(id_C\otimes\tau\otimes id_D)\circ(\varDelta_C\otimes \varDelta_D):C\otimes D&\rightarrow C\otimes D\otimes C\otimes D\\
		c\otimes d&\mapsto\sum c_{(1)}\otimes d_{(1)\tau}\otimes c_{(2)\tau}\otimes d_{(2)}.
	\end{align*}
	If this comultiplication is coassociative and $\epsilon_C\otimes\epsilon_D$ is a counit, we call this coalgebra a smash coproduct of $C$ and $D$, denoted by $C{_\tau\ltimes} D$. For $C{_\tau\ltimes} D$, we have the following identities:
	\begin{align*}
		&\sum\epsilon_D(d_\tau)\otimes c_\tau=\epsilon_D(d)c,\\
		&\sum d_\tau\otimes\epsilon_C(c_\tau)=\epsilon_C(c)d,\\
		(1.3)\quad&\sum d_{\tau(1)}\otimes d_{\tau(2)}\otimes c_{\tau}=\sum d_{(1)\tau_1}\otimes d_{(2)\tau_2}\otimes c_{\tau_1\tau_2},\\
		(1.4)\quad&\sum d_{\tau}\otimes c_{\tau(1)}\otimes c_{\tau(2)}=\sum d_{\tau_2\tau_1}\otimes c_{(1)\tau_1}\otimes c_{(2)\tau_2}
	\end{align*}
	for $\forall c\in C$, $d\in D$, where $\tau_i$ is a copy of $\tau$ for $i=1,2$ and so on.
	
	Now let $A$ and $B$ be two algebras and coalgebras (Neither is necessarily a bialgebra). Suppose that $A\#_\sigma B$ is a smash product via $\sigma$ and $A{_\tau\ltimes}B$ is a smash coproduct via $\tau$. If $(A\otimes B,m_{A\#_\sigma B},\eta_{A\#_\sigma B},\varDelta_{A{_\tau\ltimes}B}$, $\epsilon_{A{_\tau\ltimes}B})$ is a bialgebra, then we call this bialgebra a smash biproduct of $A$ and $B$, denoted by $A{_\tau\times_\sigma}B$ (cf. \cite[Definition 4.1]{C&I&M&Z}).
	
	Next we recall the definitions of normal and conormal conditions as in \cite{C&I&M&Z}:
	\begin{defn}
		[\protect{\cite[Definition 2.4]{C&I&M&Z}}] Let $A$ and $B$ be two algebras and $\sigma:B\otimes A\rightarrow A\otimes B$ a linear map. Then $\sigma$ is called left normal if $$\text{(LN)}\quad\sigma\circ(id_B\otimes\eta_A)=\eta_A\otimes id_B;$$
		$\sigma$ is called right normal if $$\text{(RN)}\quad\sigma\circ(\eta_B\otimes id_A)=id_A\otimes\eta_B.$$
	\end{defn}
	\begin{defn}
		[\protect{\cite[Definition 3.3]{C&I&M&Z}}] Let $C$ and $D$ be two coalgebras and $\tau:C\otimes D\rightarrow D\otimes C$ a linear map. Then $\tau$ is called left conormal if  $$\text{(LCN)}\quad(id_D\otimes\epsilon_C)\circ\tau=\epsilon_C\otimes id_D;$$
		$\tau$ is called right conormal if $$\text{(RCN)}\quad(\epsilon_D\otimes id_C)\circ\tau=id_C\otimes\epsilon_D.$$
	\end{defn}
	In \cite{C&I&M&Z}, the normal conditions are used in smash product $A\#_\sigma B$ to relate the multiplication with the units. The conormal conditions are used in $C{_\tau\ltimes}D$ to relate the comultiplication with the counits. Thus for smash biproduct $A{_\tau\times_\sigma}B$, $\sigma$ is always left normal and right normal; $\tau$ is always left conormal and right conormal. 
	
	But in this paper, we consider the following case for $A{_\tau\times_\sigma}B$:
	
	$\bullet$ $\tau$ is left normal: $\tau\circ(id_A\otimes\eta_B)=\eta_B\otimes id_A$, i.e., $\sum 1_{B\tau}\otimes a_{\tau}=1_B\otimes a$;
	
	$\bullet$ $\tau$ is right normal: $\tau\circ(\eta_A\otimes id_B)=id_B\otimes\eta_A$, i.e., $\sum b_{\tau}\otimes1_{A\tau}=b\otimes 1_{A}$;
	
	$\bullet$ $\sigma$ is left conormal: $(id_A\otimes\epsilon_B)\circ\sigma=\epsilon_B\otimes id_A$, i.e., $\sum a_{\sigma}\epsilon_B(b_{\sigma})=a\epsilon_B(b)$;
	
	$\bullet$ $\sigma$ is right conormal: $(\epsilon_A\otimes id_B)\circ\sigma=id_B\otimes\epsilon_A$, i.e., $\sum \epsilon_A(a_{\sigma})b_{\sigma}=\epsilon_A(a)b$.
	
	For general smash biproduct bialgebras, these conditions are not necessarily true. Note that in \cite[Definition and Proposition 2.15]{B&D}, Bespalov and Drabant defined so-called trivalent Hopf data. The four distinct conditions required are exactly these conditions above.
	
	Since $A{_\tau\times_\sigma}B$ is a bialgebra, it's a counital algebra via $\epsilon_{A{_\tau\ltimes}B}=\epsilon_A\otimes\epsilon_B$. But neither $A$ or $B$ is necessarily a counital algebra. This is justified by replacing $(A,m_A,\eta_A,\varDelta_A,\epsilon_A)$ and $(B,m_B,\eta_B,\varDelta_B,\epsilon_B)$ with $(A,m_A,\eta_A,\lambda\varDelta_A,\lambda^{-1}\epsilon_A)$ and $(B,m_B,\eta_B,\lambda^{-1}\varDelta_B,\lambda\epsilon_B)$ for some nonzero $\lambda\in\Bbbk^{\times}$. Similarly though $A{_\tau\times_\sigma}B$ is a unital coalgebra via $\eta_{A\#_\sigma B}=\eta_A\otimes\eta_B$, neither $A$ or $B$ is necessarily a unital coalgebra.
	
	The following proposition is similar to \cite[Theorem 2.15]{M&W}. But we drop the condition that $B$ is a bialgebra.
	
	\begin{prop}\label{thm:2.4}
		Let $A$, $B$ be two counital algebras and unital coalgebras (Neither is necessarily a bialgebra). Suppose that there exist two linear maps $\sigma :B\otimes A\rightarrow A\otimes B$ and $\tau :A\otimes B\rightarrow B\otimes A$. Then $A{_\tau\times_\sigma}B$ is a smash biproduct if and only if the following conditions hold:
		\begin{align*}
			&(B1)\quad A{\#_\sigma} B\text{ is a smash product};\\
			&(B2)\quad A{_\tau\ltimes} B\text{ is a smash coproduct};\\
			&(B3)\quad \tau\circ(\eta_A\otimes \eta_B)=\eta_B\otimes\eta_A,\quad (\epsilon_A\otimes \epsilon_B)\circ \sigma=\epsilon_B\otimes \epsilon_A;\\
			&(B4)\quad \sum (aa')_{(1)}\otimes 1_{B\tau}\otimes (aa')_{(2)\tau}=\sum a_{(1)}a'_{(1)\sigma}\otimes 1_{B\tau_1 \sigma}1_{B\tau_2} \otimes a_{(2)\tau_1}a'_{(2)\tau_2};\\
			&(B5)\quad \sum b_\tau \otimes a_\tau =\sum 1_{B\tau_1}b_{\tau_2}\otimes a_{\tau_1}1_{A\tau_2};\\
			&(B6)\quad \sum (bb')_{(1)\tau}\otimes 1_{A\tau}\otimes (bb')_{(2)}=\sum b_{(1)\tau_1}b'_{(1)\tau_2}\otimes 1_{A\tau_1}1_{A\tau_2\sigma} \otimes b_{(2)\sigma}b'_{(2)};\\
			&(B7)\quad \sum a_{\sigma (1)}\otimes b_{\sigma (1)\tau}\otimes a_{\sigma (2)\tau}\otimes b_{\sigma (2)}=\sum a_{(1)\sigma_1}\otimes b_{(1)\tau_1 \sigma_1}1_{B\tau_2}\otimes 1_{A\tau_1}a_{(2)\tau_2\sigma_2}\\
			&\hspace{4em}\otimes b_{(2)\sigma_2}.
		\end{align*}
	\end{prop}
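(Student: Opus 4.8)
The plan is to prove both implications by translating the single remaining bialgebra requirement — that $\varDelta_{A{_\tau\ltimes}B}$ and $\epsilon_{A{_\tau\ltimes}B}$ be algebra maps — into the seven listed conditions, using the factorization $a\otimes b=(a\otimes 1_B)(1_A\otimes b)$ available in any smash product. First I would record the structural facts supplied by $(B1)$ and $(B2)$: the unit normalizations $\sum 1_{A\sigma}\otimes b_\sigma=1_A\otimes b$ and $\sum a_\sigma\otimes 1_{B\sigma}=a\otimes 1_B$ for $\sigma$, and the counit normalizations $\sum\epsilon_B(b_\tau)\otimes a_\tau=\epsilon_B(b)a$, $\sum b_\tau\otimes\epsilon_A(a_\tau)=\epsilon_A(a)b$ for $\tau$. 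From these one gets the two key identities $(a\otimes 1_B)(1_A\otimes b)=a\otimes b$ and $(1_A\otimes b)(a\otimes 1_B)=\sum a_\sigma\otimes b_\sigma$. Since a smash biproduct is by definition the datum $(A\otimes B,m_{A\#_\sigma B},\eta,\varDelta_{A{_\tau\ltimes}B},\epsilon)$, conditions $(B1)$ and $(B2)$ are precisely the existence of the algebra and coalgebra structures, so in both directions they are immediate and the whole content lies in the compatibility between $\bm{m}$ and $\bm{\varDelta}$.

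For the forward direction I would assume $A{_\tau\times_\sigma}B$ is a bialgebra and read off each remaining $(Bi)$ by specializing the algebra-map identities for $\varDelta$ and $\epsilon$. Condition $(B3)$ comes from $\varDelta(1_A\otimes 1_B)=(1_A\otimes 1_B)\otimes(1_A\otimes 1_B)$ together with multiplicativity of $\epsilon=\epsilon_A\otimes\epsilon_B$, using that $A,B$ are counital algebras and unital coalgebras. Conditions $(B4)$–$(B7)$ are the four instances of $\varDelta(xy)=\varDelta(x)\varDelta(y)$ for $x,y\in\{a\otimes 1_B,\ 1_A\otimes b\}$: explicitly $(B4)$ is the case $(a\otimes 1_B)(a'\otimes 1_B)$, $(B6)$ the case $(1_A\otimes b)(1_A\otimes b')$, $(B5)$ the case $(a\otimes 1_B)(1_A\otimes b)=a\otimes b$, and $(B7)$ the case $(1_A\otimes b)(a\otimes 1_B)=\sum a_\sigma\otimes b_\sigma$. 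In each I would expand both sides through $\bm{\varDelta}$ and $\bm{m}$, simplify all slots containing $1_A$ or $1_B$ by the normalizations above, and then extract the stated form by applying $\epsilon_A$ or $\epsilon_B$ to the outer tensor-slots: either to discard a slot already forced to be a unit (as in $(B4)$ and $(B6)$) or to collapse a comultiplication via the counit axiom (as in $(B5)$), while $(B7)$ is already a bare four-factor identity needing no such reduction.

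For the converse I would assume $(B1)$–$(B7)$ and verify that $\varDelta,\epsilon$ are algebra maps. Unitality and multiplicativity of $\epsilon$, together with $\varDelta(1\otimes1)=1\otimes1$, follow from $(B3)$ exactly as above. The heart is full multiplicativity of $\varDelta$, obtained from the four generator cases by a factorization argument. Writing $U=A\otimes 1_B$ and $V=1_A\otimes B$, every element is a linear combination of products $uv$ with $u\in U,\ v\in V$, and the smash relation $(1_A\otimes b)(a\otimes 1_B)=\sum(a_\sigma\otimes 1_B)(1_A\otimes b_\sigma)$ rewrites any $vu'$ back into $U\cdot V$ form. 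Given a product $uvu'v'$, I would move $vu'$ into $\sum_i u_i'v_i'$, regroup as $\sum_i(uu_i')(v_i'v')$, apply $(B5)$ to each $U\cdot V$ product, then $(B4)$ to $uu_i'$ and $(B6)$ to $v_i'v'$; reassembling the middle factor via $\sum_i\varDelta(u_i')\varDelta(v_i')=\varDelta(vu')=\varDelta(v)\varDelta(u')$ (using $(B5)$ and then $(B7)$) yields $\varDelta(uvu'v')=\varDelta(uv)\varDelta(u'v')$. As products $uv$ span $A\otimes B$, this is multiplicativity everywhere, so $A{_\tau\times_\sigma}B$ is a bialgebra.

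The main obstacle I anticipate is the bookkeeping in the converse's assembling step: one must check that the rewriting $\sum_i\varDelta(u_i')\varDelta(v_i')=\varDelta(vu')=\varDelta(v)\varDelta(u')$ is exactly what $(B5)$ and $(B7)$ provide and that no further compatibility is silently needed. Dually, on the forward side the delicate point is confirming that applying the counits to the suppressed unit-slots recovers each $(Bi)$ \emph{verbatim} rather than a weaker projected form, which relies on the normality and conormality identities from $(B1)$ and $(B2)$ to cancel the inserted units cleanly.
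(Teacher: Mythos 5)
Your proposal is correct and follows exactly the route the paper intends: the paper gives no written proof of Proposition \ref{thm:2.4} (it only points to the analogue in \cite{M&W}), but its remark immediately after the statement identifies $(B4)$--$(B7)$ with multiplicativity of $\varDelta$ on the four generator products $(a\otimes1_B)(a'\otimes1_B)$, $(a\otimes1_B)(1_A\otimes b)$, $(1_A\otimes b)(1_A\otimes b')$ and $(1_A\otimes b)(a\otimes1_B)$, which is precisely your decomposition, and your factorization argument for the converse (together with the observation that $(B4)$ and $(B6)$ lose no information when the redundant unit slot is dropped, while the full four-tensor form of $(B5)$ is recovered by substituting $a_{(2)},b_{(1)}$) closes the argument correctly.
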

	Here $(B4)$ means that $\varDelta$ is multiplicative on $(a\otimes 1_B)(a'\otimes 1_B)$, similarly $(B5)$ for $(a\otimes1_B)(1_A\otimes b)$, $(B6)$ for $(1_A\otimes b)(1_A\otimes b')$ and $(B7)$ for $(1_A\otimes b)(a\otimes1_B)$.
	
	Throughout the rest of the paper, when speaking of a smash biproduct bialgebra $A{_\tau\times_\sigma}B$, we always assume that both $A$ and $B$ are counital algebras via $\epsilon_A$, $\epsilon_B$ and unital coalgebras via $\eta_A$, $\eta_B$.

	\section{Factorization of quasitriangular structures of smash biproduct bialgebras}\label{Sec:3}
	In this section, we establish our factorization theory of quasitriangular structures of the smash biproduct bialgebra $A{_\tau\times_\sigma}B$ for which $\sigma$ is right conormal.
	
	We recall the definition of the factor elements of a quasitriangular structure as in \cite{Z&W&J}. Let $A{_\tau\times_\sigma}B$ be a smash biproduct bialgebra. Suppose that $\mathcal{R}=\sum \mathcal{R}^1\otimes \mathcal{R}^2\otimes \mathcal{R}^3\otimes \mathcal{R}^4\in (A{_\tau\times_\sigma}B)\otimes(A{_\tau\times_\sigma}B)$ is a quasitriangular structure of $A{_\tau\times_\sigma}B$. We have the following elements:
	\begin{align*}
		&W=\sum W^1\otimes W^2=(\pi_B\otimes \pi_B)(\mathcal{R})\in B\otimes B,\\
		&X=\sum X^1\otimes X^2=(\pi_A\otimes \pi_B)(\mathcal{R})\in A\otimes B,\\
		&Y=\sum Y^1\otimes Y^2=(\pi_B\otimes \pi_A)(\mathcal{R})\in B\otimes A,\\
		&Z=\sum Z^1\otimes Z^2=(\pi_A\otimes \pi_A)(\mathcal{R})\in A\otimes A,
	\end{align*}
	where $\pi_A=id_A\otimes\epsilon_B:A{_\tau\times_\sigma}B\rightarrow A$ and $\pi_B=\epsilon_A\otimes id_B:A{_\tau\times_\sigma}B\rightarrow B$.
	
	We call these $W$, $X$, $Y$ and $Z$ the factor elements of $\mathcal{R}$. Immediately we get the first properties for these factor elements.
	\begin{prop}\label{thm:3.1}
		Let $A{_\tau\times_\sigma}B$ be a smash biproduct bialgebra. Suppose that $\mathcal{R}=\sum \mathcal{R}^1\otimes \mathcal{R}^2\otimes \mathcal{R}^3\otimes \mathcal{R}^4\in (A{_\tau\times_\sigma}B)\otimes(A{_\tau\times_\sigma}B)$ is a quasitriangular structure of $A{_\tau\times_\sigma}B$. Then
		\begin{align*}
			(N1)\quad&\sum\epsilon_B(W^1)W^2=1_B=\sum W^1\epsilon_B(W^2);\hspace{16em}\\
			(N2)\quad&\sum\epsilon_A(X^1)X^2=1_B,\quad\sum X^1\epsilon_B(X^2)=1_A;\\
			(N3)\quad&\sum\epsilon_B(Y^1)Y^2=1_A,\quad\sum Y^1\epsilon_A(Y^2)=1_B;\\
			(N4)\quad&\sum\epsilon_A(Z^1)Z^2=1_A=\sum Z^1\epsilon_A(Z^2).
		\end{align*}
	\end{prop}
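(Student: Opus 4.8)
The plan is to derive all eight identities in $(N1)$--$(N4)$ directly from the normalization axiom $(QT1)$, exploiting that each factor element is a projection of $\mathcal{R}$ and that composing a projection with the matching counit recovers the counit of the whole biproduct. Writing $H=A{_\tau\times_\sigma}B$, so that $H=A\otimes B$ as a vector space with $\epsilon_H=\epsilon_A\otimes\epsilon_B$ and $1_H=1_A\otimes1_B$, I first record the elementary identities
\[
  \epsilon_A\circ\pi_A=\epsilon_A\otimes\epsilon_B=\epsilon_H=\epsilon_B\circ\pi_B,
  \qquad \pi_A(1_H)=1_A,\quad \pi_B(1_H)=1_B,
\]
which follow at once from $\pi_A=id_A\otimes\epsilon_B$ and $\pi_B=\epsilon_A\otimes id_B$.

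Next I would rewrite $\mathcal{R}=\sum\mathcal{R}^{(1)}\otimes\mathcal{R}^{(2)}\in H\otimes H$, where $\mathcal{R}^{(1)}=\mathcal{R}^1\otimes\mathcal{R}^2$ and $\mathcal{R}^{(2)}=\mathcal{R}^3\otimes\mathcal{R}^4$, and observe that each factor element has the form $P_1(\mathcal{R}^{(1)})\otimes P_2(\mathcal{R}^{(2)})$ for suitable projections $P_1,P_2\in\{\pi_A,\pi_B\}$; for instance $W^1=\pi_B(\mathcal{R}^{(1)})$ and $W^2=\pi_B(\mathcal{R}^{(2)})$, and analogously for $X$, $Y$, $Z$.

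Then for every left-hand normalization --- those applying a counit to the first leg, namely $\sum\epsilon_B(W^1)W^2$, $\sum\epsilon_A(X^1)X^2$, $\sum\epsilon_B(Y^1)Y^2$ and $\sum\epsilon_A(Z^1)Z^2$ --- the matching-counit identity collapses the first leg to $\epsilon_H$, so the expression equals $P_2\big((\epsilon_H\otimes id_H)(\mathcal{R})\big)=P_2(1_H)$, which is $1_A$ or $1_B$ according to $P_2$, by the left equality in $(QT1)$. Symmetrically, every right-hand normalization reduces via the right equality $(id_H\otimes\epsilon_H)(\mathcal{R})=1_H$ to $P_1(1_H)$. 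Carrying out this scheme slot by slot yields exactly the eight stated equalities.

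There is essentially no substantial obstacle here, since the result is a formal consequence of $(QT1)$. The only thing to watch is the bookkeeping of the four tensor legs of $\mathcal{R}$ and the correct pairing of each counit $\epsilon_A$ or $\epsilon_B$ with the projection $\pi_A$ or $\pi_B$ that defined the corresponding factor element; getting the composite $\epsilon\circ\pi=\epsilon_H$ right in each of the eight cases is what makes the reduction to $(QT1)$ go through. It is worth emphasizing that none of this invokes $(QT2)$--$(QT4)$ or the right conormality of $\sigma$, so these normalization properties hold for the factor elements of any quasitriangular structure on any smash biproduct bialgebra.
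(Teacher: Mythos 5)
Your proposal is correct and is exactly the argument the paper intends with its one-line proof ``By using $(QT1)$'': the key observations $\epsilon_A\circ\pi_A=\epsilon_B\circ\pi_B=\epsilon_H$, $\pi_A(1_H)=1_A$, $\pi_B(1_H)=1_B$ reduce each of the eight identities to one side of $(QT1)$. Your closing remark that neither $(QT2)$--$(QT4)$ nor right conormality of $\sigma$ is needed is also consistent with the paper, which states this proposition before imposing any conormality hypothesis.
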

	
	\begin{proof}
		By using $(QT1)$ for $A{_\tau\times_\sigma}B$.
	\end{proof}
	
	For any element $W\in B\otimes B$ satisfying $(N1)$ above, we will call it normalized. It's the same to $X\in A\otimes B$, $Y\in B\otimes A$ and $Z\in A\otimes A$ for $(N2)-(N4)$.
	
	Suppose that $\sigma$ is right conormal. We have the following identities derived from $(B4)$, $(B5)$, $(B6)$ and $(B7)$ of Proposition \ref{thm:2.4}:
	
	\begin{prop}
		Let $A{_\tau\times_\sigma}B$ be a smash biproduct bialgebra for which $\sigma$ is right conormal. Then for $a,a'\in A$, $b,b'\in B$ we have
		\begin{align*}
			(B4.1)\quad &\sum 1_{B\tau}\otimes (aa')_{\tau}=\sum 1_{B\tau_1}1_{B\tau_2}\otimes a_{\tau_1}a'_{\tau_2};\\
			(B6.1)\quad &\sum (bb')_{(1)}\otimes (bb')_{(2)}=\sum b_{(1)}b'_{(1)}\otimes b_{(2)}b'_{(2)};\\
			(B7.1)\quad &\sum a_{\sigma}\otimes b_{\sigma}=\sum a_\sigma\otimes \epsilon_B(b_{(1)\sigma})b_{(2)};\\
			(B8.1)\quad &\sum b_\tau\otimes (aa')_\tau=\sum 1_{B\tau_1}b_{\tau_2}\otimes a_{\tau_1}a'_{\tau_2}.
		\end{align*}
	\end{prop}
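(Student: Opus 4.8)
The plan is to obtain each of the four identities by applying an appropriate counit functional ($\epsilon_A$ or $\epsilon_B$) to a chosen tensor leg of the corresponding compatibility condition $(B4)$–$(B7)$ of Proposition \ref{thm:2.4} and then simplifying. Three facts are used repeatedly. First, since $A$ and $B$ are counital algebras, $\epsilon_A$ and $\epsilon_B$ are algebra maps, so they split any product occurring in the leg to which they are applied, and $\epsilon_A(1_A)=\epsilon_B(1_B)=1$. Second, the counit axioms of the coalgebras $A$ and $B$ collapse terms of the form $\sum\epsilon_A(a_{(1)})a_{(2)}=a$. Third, the left/right conormality of $\tau$ and left/right normality of $\sigma$ that hold automatically in any smash biproduct, together with the \emph{assumed} right conormality of $\sigma$, are exactly what is needed to absorb the surviving $\tau$- and $\sigma$-legs carrying units.

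\textbf{The identities $(B4.1)$ and $(B6.1)$.} For $(B4.1)$ I would apply $\epsilon_A$ to the first ($A$-)leg of $(B4)$. On the left, the counit axiom of $A$ turns $\sum\epsilon_A((aa')_{(1)})\,[1_{B\tau}\otimes(aa')_{(2)\tau}]$ into $\sum 1_{B\tau}\otimes(aa')_\tau$. On the right, the algebra-map property splits $\epsilon_A(a_{(1)}a'_{(1)\sigma})=\epsilon_A(a_{(1)})\epsilon_A(a'_{(1)\sigma})$; right conormality of $\sigma$ then removes the $\sigma$ through $\sum\epsilon_A(a'_{(1)\sigma})1_{B\tau_1\sigma}=\epsilon_A(a'_{(1)})1_{B\tau_1}$, and two applications of the counit axiom of $A$ produce $\sum 1_{B\tau_1}1_{B\tau_2}\otimes a_{\tau_1}a'_{\tau_2}$. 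The identity $(B6.1)$ is dual in flavour: I would apply $\epsilon_A$ to the middle ($A$-)leg of $(B6)$, split the product by the algebra-map property of $\epsilon_A$, collapse the two $\tau$-legs carrying $1_A$ by left conormality of $\tau$, and remove the single $\sigma$-leg by right conormality of $\sigma$.

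\textbf{The identity $(B7.1)$.} This is the most laborious case. I would apply $\epsilon_B$ to the second leg and $\epsilon_A$ to the third leg of $(B7)$. On the left, the relation $(\epsilon_B\otimes\epsilon_A)\circ\tau=\epsilon_A\otimes\epsilon_B$ (a consequence of the conormality of $\tau$) together with the counit axioms of $A$ and $B$ reduces $\sum a_{\sigma(1)}\otimes b_{\sigma(1)\tau}\otimes a_{\sigma(2)\tau}\otimes b_{\sigma(2)}$ to $\sum a_\sigma\otimes b_\sigma$. On the right, after splitting the two products by the algebra-map property of $\epsilon_A$ and $\epsilon_B$, I would successively collapse $\tau_1$ by left conormality of $\tau$, collapse $\tau_2$ by right conormality of $\tau$, remove $\sigma_2$ by right conormality of $\sigma$, and finally absorb the surviving $\epsilon_A(a_{(2)})$ against $a_{(1)\sigma_1}$ by the counit axiom, landing on $\sum a_\sigma\otimes\epsilon_B(b_{(1)\sigma})b_{(2)}$.

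\textbf{The identity $(B8.1)$ and the main obstacle.} The remaining identity is not a single counit-application; it is proved in two steps, and this is where I expect the real difficulty to lie. The idea is to read $(B5)$ as saying that $\tau$ factors componentwise, i.e. $\sum b_\tau\otimes a_\tau$ equals the componentwise product in $B\otimes A$ of $\tau(a\otimes 1_B)=\sum 1_{B\tau}\otimes a_\tau$ with $\tau(1_A\otimes b)=\sum b_\tau\otimes 1_{A\tau}$. Applying $(B5)$ with $a$ replaced by the product $aa'$ gives $\sum b_\tau\otimes(aa')_\tau$ as the componentwise product of $\tau(aa'\otimes 1_B)$ with $\tau(1_A\otimes b)$; then $(B4.1)$ (which itself requires right conormality of $\sigma$) rewrites $\tau(aa'\otimes 1_B)$ as the componentwise product of $\tau(a\otimes 1_B)$ with $\tau(a'\otimes 1_B)$; and a final use of $(B5)$ recombines $\tau(a'\otimes 1_B)$ with $\tau(1_A\otimes b)$ into $\tau(a'\otimes b)$, yielding $\sum 1_{B\tau_1}b_{\tau_2}\otimes a_{\tau_1}a'_{\tau_2}$. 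The principal obstacle throughout is organizational rather than conceptual: one must keep careful track of which copy $\sigma_i$, $\tau_j$ each leg belongs to when several are nested, and verify that every counit or (co)normality identity is applied to precisely the leg it governs. Once the legs are correctly matched, each reduction is one of the bookkeeping facts listed above.
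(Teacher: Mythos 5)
Your proposal is correct and follows essentially the same route as the paper: $(B4.1)$, $(B6.1)$ and $(B7.1)$ are obtained by applying exactly the counit functionals you name to $(B4)$, $(B6)$ and $(B7)$ and simplifying via the automatic (co)normality identities plus the assumed right conormality of $\sigma$, and $(B8.1)$ is derived by the same three-step chain $(B5)$, then $(B4.1)$, then $(B5)$ again. The paper's proof of $(B8.1)$ is just your ``componentwise product'' argument written out with explicit $\tau_i$ indices.
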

	
	\begin{proof}
		$(B4.1)$: apply $\epsilon_A\otimes id_B\otimes id_A$ to $(B4)$;
		
		$(B6.1)$: apply $id_B\otimes \epsilon_A\otimes id_B$ to $(B6)$;
		
		$(B7.1)$: apply $id_A\otimes\epsilon_B\otimes\epsilon_A\otimes id_B$ to $(B7)$;
		
		$(B8.1):\sum b_\tau\otimes (aa')_\tau\stackrel{(B5)}{=}\sum 1_{B\tau_1}b_{\tau_2}\otimes (aa')_{\tau_1}1_{A\tau_2}\stackrel{(B4.1)}{=}\sum 1_{B\tau_1}1_{B\tau_2}b_{\tau_3}\otimes a_{\tau_1}a'_{\tau_2}1_{A\tau_3}$
		$\stackrel{(B5)}{=}1_{B\tau_1}b_{\tau_2}\otimes a_{\tau_1}a'_{\tau_2}$.
	\end{proof}
	
	\begin{rmk}\label{Rmk:3.3}
		$(B6.1)$ indicates that $B$ is a bialgebra. Now $\pi_B:A{_\tau\times_\sigma}B\rightarrow B$ $(\sum_i a_i\otimes b_i\mapsto \sum_i\epsilon_A(a_i)b_i)$ is an algebra and coalgebra surjection. Thus $B$ is a quotient-bialgebra of $A{_\tau\times_\sigma}B$.
	\end{rmk}
	
	For a given quasitriangular structure $\mathcal{R}$ of $A{_\tau\times_\sigma}B$, we can express it by using its factor elements if $\sigma$ is right conormal.
	
	\begin{prop}\label{thm:3.4}
		Let $A{_\tau\times_\sigma}B$ be a smash biproduct bialgebra for which $\sigma$ is right conormal. Suppose that $\mathcal{R}=\sum \mathcal{R}^1\otimes \mathcal{R}^2\otimes \mathcal{R}^3\otimes \mathcal{R}^4\in (A{_\tau\times_\sigma}B)\otimes(A{_\tau\times_\sigma}B)$ is a quasitriangular structure of $A{_\tau\times_\sigma}B$. Then we have
		\begin{align*}
			\mathcal{R}=&\sum \tensor{Z}{^1_{\tau_1\tau_2}}\tensor{\bar{X}}{^1_{\tau_3}}X^1\otimes W^1Y^1\otimes Z^2\tensor{Y}{^2_{\sigma_1\sigma_2}}\epsilon_B(1_{B\tau_1\sigma_2}\tensor{\bar{X}}{^2_{\sigma_1}})\otimes1_{B\tau_2}1_{B\tau_3}X^2W^2.
		\end{align*}
	\end{prop}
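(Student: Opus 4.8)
The plan is to reconstruct $\mathcal{R}$ from its four corner projections by a ``resolution of the identity'' on each tensor factor, and then to convert the comultiplications that this produces into products of copies of $\mathcal{R}$ by means of $(QT2)$ and $(QT3)$, so that the corner projections collapse those copies onto the factor elements $W,X,Y,Z$. Write $H=A{_\tau\times_\sigma}B$ throughout.

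First I would establish the elementary backbone identity: for every $h\in H$,
\[
h=\sum\bigl(\pi_A(h_{(1)})\otimes 1_B\bigr)\bigl(1_A\otimes\pi_B(h_{(2)})\bigr)=\sum \pi_A(h_{(1)})\otimes\pi_B(h_{(2)}).
\]
Writing $h=a\otimes b$ and using the smash coproduct formula $\bm{\varDelta}(a\otimes b)=\sum a_{(1)}\otimes b_{(1)\tau}\otimes a_{(2)\tau}\otimes b_{(2)}$, the right-hand side becomes $\sum a_{(1)}\epsilon_B(b_{(1)\tau})\otimes\epsilon_A(a_{(2)\tau})b_{(2)}$, which collapses to $a\otimes b$ after applying $\epsilon_B\otimes\epsilon_A$ to $\tau(a_{(2)}\otimes b_{(1)})$ via the (always available) conormality of $\tau$ and then the counit axioms. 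This identity is what lets me peel the $A$-part and the $B$-part off a single leg while keeping them tied together through the comultiplication, and applying it to both legs of $\mathcal{R}$ gives the starting point $\mathcal{R}=(\pi_A\otimes\pi_B\otimes\pi_A\otimes\pi_B)(\varDelta\otimes\varDelta)(\mathcal{R})$.

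Next I would feed $(QT2)$ and $(QT3)$ into the comultiplications, so that $(\varDelta\otimes\varDelta)(\mathcal{R})$ is rewritten as a product of copies of $\mathcal{R}$ in $H^{\otimes 4}$; after the corner projections each copy collapses, by definition of the factor elements, to one of $W,X,Y,Z$. The multiplications taking place inside each slot are smash-product multiplications, so they contribute the maps $\sigma_i$; the comultiplications of the slots that are $A$-supported (elements of the form $a\otimes 1_B$) contribute the maps $\tau_i$ together with the unit terms $1_{B\tau_i}$, exactly as in the statement. Reaching precisely the displayed asymmetric form — rather than the fully-multiplied-out, $\tau$-free expression one would get by converting every comultiplication into products — requires keeping the smash coproduct expanded on the $A$-supported slots. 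I expect the second copy $\bar{X}$ of $X$, whose $B$-leg survives only inside the scalar $\epsilon_B(1_{B\tau_1\sigma_2}\bar{X}^2_{\sigma_1})$, to arise when the $A$-heavy slot (the one producing $\mathcal{R}^1=Z^1_{\tau_1\tau_2}\bar{X}^1_{\tau_3}X^1$) is resolved a second time, so that $X$ is consumed twice: once genuinely in positions $1$ and $4$, and once with its second component trapped by a counit.

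The remaining work is reduction. I would repeatedly rewrite using $(B4.1)$--$(B8.1)$, the right conormality of $\sigma$, the conormality of $\tau$, and the normalizations $(N1)$--$(N4)$ of Proposition \ref{thm:3.1}, until the three copies of $\tau$ and two copies of $\sigma$ attach in the positions shown. The main obstacle is precisely this bookkeeping: tracking where $\tau_1,\tau_2,\tau_3$ and $\sigma_1,\sigma_2$ land, and justifying every counit collapse. Here the hypothesis that $\sigma$ is right conormal, together with the fact that $\sigma$ is \emph{not} assumed left conormal, is decisive: it is what permits the collapse of the $B$-component of $\bar{X}$ against $1_{B\tau_1\sigma_2}$ while forbidding the symmetric simplification that would otherwise eliminate $\bar{X}$ altogether, and it therefore forces the asymmetric final form. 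Once each reduction is seen to be licensed by exactly one of these identities, the displayed expression for $\mathcal{R}$ follows.
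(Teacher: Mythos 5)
Your overall architecture is the paper's: you start from the resolution $\mathcal{R}=(\pi_A\otimes\pi_B\otimes\pi_A\otimes\pi_B)(\varDelta\otimes\varDelta)(\mathcal{R})$ (your backbone identity is correct and is exactly how the paper enters the computation), feed $(QT2)$ and $(QT3)$ into the two comultiplications to write $(\varDelta\otimes\varDelta)(\mathcal{R})$ as a product of four copies of $\mathcal{R}$, and then try to collapse each copy onto a factor element. But there is a genuine gap in the tool list you propose for the reduction: you never invoke $(QT4)$, and the reduction cannot be completed without it. After projecting, the slots contain expressions of the form $\sum\mathcal{R}^1a_{\sigma}\epsilon_B(\mathcal{R}^2_{\sigma})\otimes\epsilon_A(\mathcal{R}^3)\mathcal{R}^4$, i.e.\ $\pi_A$ applied to a smash-product multiplication. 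Since $\pi_A$ is not an algebra map (only $\pi_B$ is, under right conormality of $\sigma$), these do not factor through the individual projections, and right conormality of $\sigma$ alone is powerless here because the surviving leg is the $A$-leg, not the $B$-leg. The paper resolves this by first extracting from $(QT4)$ (set $b=1_B$, apply $id_A\otimes\epsilon_B\otimes\epsilon_A\otimes id_B$) the commutation identity
\begin{align*}
\sum a_{\tau}{\mathcal{R}^1}\epsilon_B({\mathcal{R}^2})\otimes \epsilon_A(\mathcal{R}^3)\,1_{B\tau}{\mathcal{R}^4}=\sum{\mathcal{R}^1}a_{\sigma}\epsilon_B({\mathcal{R}^2}_{\sigma})\otimes \epsilon_A({\mathcal{R}^3}){\mathcal{R}^4},
\end{align*}
which is applied twice: once to establish the auxiliary formula $\sum\mathcal{R}^1\epsilon_B(\mathcal{R}^2)\otimes\mathcal{R}^3\otimes\mathcal{R}^4=\sum Z^1_{\tau}X^1\otimes Z^2\otimes 1_{B\tau}X^2$ (this is where your second copy $\bar X$ really enters), and once more in the final assembly. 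This identity is the sole source of the maps $\tau_1,\tau_2,\tau_3$ and of the factors $1_{B\tau_2}1_{B\tau_3}$ in the target formula; they do not come from expanding the smash coproduct on $A$-supported slots, as you suggest. Relatedly, your account of the role of right conormality is slightly off: in the paper it is used to kill the $\sigma$'s in the slots where only the $B$-component survives under $\epsilon_A$ (via $\sum\epsilon_A(a_\sigma)b_\sigma=\epsilon_A(a)b$), not to collapse $\epsilon_B(1_{B\tau_1\sigma_2}\bar X^2_{\sigma_1})$, which in fact survives uncollapsed in the final expression. Without the $(QT4)$-derived commutation step your plan stalls at the $\sigma$-twisted products and cannot reach the stated $\tau$-twisted form.
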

	
	Here $\bar{X}=\sum\tensor{\bar{X}}{^1}\otimes\tensor{\bar{X}}{^2}$ is another copy of $X$. The variations of $X$ on the head always mean another copy of $X$, and the same to $W$, $Y$ and $Z$.
	
	\begin{proof}
		By $(QT4)$, for $\forall a\in A$, $b\in B$ we have
		\begin{align}
			&\sum a_{(2)\tau}{\mathcal{R}^1}_{{\sigma}_1} \otimes b_{(2){{\sigma}_1}}{\mathcal{R}^2} \otimes a_{(1)}{\mathcal{R}^3}_{{\sigma}_2} \otimes b_{(1)\tau {{\sigma}_2}}{\mathcal{R}^4}\\
			=&\sum{\mathcal{R}^1}a_{(1){{\sigma}_1}}\otimes {\mathcal{R}^2}_{{\sigma}_1}b_{(1)\tau}\otimes {\mathcal{R}^3}a_{(2)\tau{{\sigma}_2}}\otimes {\mathcal{R}^4}_{{\sigma}_2}b_{(2)}\nonumber.
		\end{align}
		Set $b=1_B$, we have
		\begin{align*}
			\sum&a_{(2)\tau}{\mathcal{R}^1} \otimes {\mathcal{R}^2} \otimes a_{(1)}{\mathcal{R}^3}_{\sigma} \otimes 1_{B\tau {\sigma}}{\mathcal{R}^4}\\
			=&\sum{\mathcal{R}^1}a_{(1){{\sigma}_1}}\otimes {\mathcal{R}^2}_{{\sigma}_1}1_{B\tau}\otimes {\mathcal{R}^3}a_{(2)\tau{{\sigma}_2}}\otimes {\mathcal{R}^4}_{{\sigma}_2}.
		\end{align*}
		Apply $id_A\otimes \epsilon_B\otimes\epsilon_A\otimes id_B$ on both sides, we have
		\begin{align}
			\sum a_{\tau}{\mathcal{R}^1}\epsilon_B({\mathcal{R}^2})\otimes \epsilon_A(\mathcal{R}^3) 1_{B\tau}{\mathcal{R}^4}=\sum{\mathcal{R}^1}a_{\sigma}\epsilon_B( {\mathcal{R}^2}_{\sigma})\otimes \epsilon_A({\mathcal{R}^3}){\mathcal{R}^4}.
		\end{align}
		By $(QT2)$ and $(QT3)$, we calculate $(\varDelta \otimes \varDelta)(\mathcal{R})$, and get
		\begin{align}
			\sum&{\mathcal{R}^1}_{(1)}\otimes {\mathcal{R}^2}_{(1){\tau_1}}\otimes {\mathcal{R}^1}_{(2){\tau_1}}\otimes {\mathcal{R}^2}_{(2)}\otimes {\mathcal{R}^3}_{(1)}\otimes{\mathcal{R}^4}_{(1){\tau_2}}\otimes{\mathcal{R}^3}_{(2){\tau_2}}\\
			&\otimes {\mathcal{R}^4}_{(2)}\nonumber\\
			=\sum&{\mathcal{R}^1}\tensor{\mathcal{\tilde{R}}}{^1_{\sigma_1}}\otimes {\mathcal{R}^2}_{\sigma_1}{\mathcal{\tilde{R}}^2}\otimes {\mathcal{\bar{R}}^1}{\mathcal{\bar{\bar{R}}}^1}_{\sigma_2}\otimes \tensor{\mathcal{\bar{R}}}{^2_{\sigma_2}}{\mathcal{\bar{\bar{R}}}^2}\otimes {\mathcal{\tilde{R}}^3}{\mathcal{\bar{\bar{R}}}^3}_{\sigma_3}\otimes \tensor{\mathcal{\tilde{R}}}{^4_{\sigma_3}}{\mathcal{\bar{\bar{R}}}^4}\nonumber\\
			&\otimes {\mathcal{R}^3}\tensor{\mathcal{\bar{R}}}{^3_{\sigma_4}}\otimes {\mathcal{R}^4}_{\sigma_4}{\mathcal{\bar{R}}^4}\nonumber.
		\end{align}
		Apply $\epsilon_A\otimes \epsilon_B\otimes id_A\otimes \epsilon_B\otimes id_A\otimes \epsilon_B\otimes \epsilon_A\otimes id_B$ to $(3)$, we have
		\begin{align}
			\sum&{\mathcal{R}^1}\epsilon_B(\mathcal{R}^2)\otimes \mathcal{R}^3\otimes\mathcal{R}^4\\
			=\sum&\epsilon_A({\mathcal{R}^1}\tensor{\mathcal{\tilde{R}}}{^1_{\sigma_1}})\epsilon_B( {\mathcal{R}^2}_{\sigma_1}{\mathcal{\tilde{R}}^2}) {\mathcal{\bar{R}}^1}{\mathcal{\bar{\bar{R}}}^1}_{\sigma_2} \epsilon_B(\tensor{\mathcal{\bar{R}}}{^2_{\sigma_2}}{\mathcal{\bar{\bar{R}}}^2})\otimes {\mathcal{\tilde{R}}^3}{\mathcal{\bar{\bar{R}}}^3}_{\sigma_3} \epsilon_B(\tensor{\mathcal{\tilde{R}}}{^4_{\sigma_3}}{\mathcal{\bar{\bar{R}}}^4})\nonumber\\
			&\otimes\epsilon_A({\mathcal{R}^3}\tensor{\mathcal{\bar{R}}}{^3_{\sigma_4}}) {\mathcal{R}^4}_{\sigma_4}{\mathcal{\bar{R}}^4}\nonumber\\
			=\sum&\epsilon_A({\mathcal{R}^1}\tensor{\mathcal{\tilde{R}}}{^1})\epsilon_B( {\mathcal{R}^2}{\mathcal{\tilde{R}}^2}) {\mathcal{\bar{R}}^1}{\mathcal{\bar{\bar{R}}}^1}_{\sigma_2} \epsilon_B(\tensor{\mathcal{\bar{R}}}{^2_{\sigma_2}}{\mathcal{\bar{\bar{R}}}^2})\otimes {\mathcal{\tilde{R}}^3}{\mathcal{\bar{\bar{R}}}^3}_{\sigma_3} \epsilon_B(\tensor{\mathcal{\tilde{R}}}{^4_{\sigma_3}}{\mathcal{\bar{\bar{R}}}^4})\nonumber\\
			&\otimes \epsilon_A({\mathcal{R}^3}\tensor{\mathcal{\bar{R}}}{^3}) {\mathcal{R}^4}{\mathcal{\bar{R}}^4}\nonumber\\
			=\sum&{\mathcal{\bar{R}}^1}{\mathcal{\bar{\bar{R}}}^1}_{\sigma_2} \epsilon_B(\tensor{\mathcal{\bar{R}}}{^2_{\sigma_2}})\epsilon_B({\mathcal{\bar{\bar{R}}}^2})\otimes{\mathcal{\bar{\bar{R}}}^3}\epsilon_B({\mathcal{\bar{\bar{R}}}^4})\otimes \epsilon_A(\tensor{\mathcal{\bar{R}}}{^3}) {\mathcal{R}^4}\nonumber\\
			\stackrel{(2)}{=}\sum&{\mathcal{\bar{\bar{R}}}^1}_{\tau}{\mathcal{\bar{R}}^1} \epsilon_B(\tensor{\mathcal{\bar{R}}}{^2})\epsilon_B({\mathcal{\bar{\bar{R}}}^2})\otimes {\mathcal{\bar{\bar{R}}}^3}\epsilon_B({\mathcal{\bar{\bar{R}}}^4})\otimes1_{B\tau}\epsilon_A(\tensor{\mathcal{\bar{R}}}{^3}) {\mathcal{R}^4}\nonumber\\
			\stackrel{}{=}\sum& {\mathcal{\bar{\bar{R}}}^1}_{\tau}\epsilon_B({\mathcal{\bar{\bar{R}}}^2}){\mathcal{\bar{R}}^1} \epsilon_B(\tensor{\mathcal{\bar{R}}}{^2})\otimes {\mathcal{\bar{\bar{R}}}^3}\epsilon_B({\mathcal{\bar{\bar{R}}}^4})\otimes1_{B\tau}\epsilon_A(\tensor{\mathcal{\bar{R}}}{^3}) {\mathcal{R}^4}\nonumber\\
			=\sum&\tensor{Z}{^1_\tau}X^1\otimes Z^2\otimes 1_{B\tau}X^2\nonumber.
		\end{align}
		Apply $id_A\otimes \epsilon_B\otimes \epsilon_A\otimes id_B\otimes id_A\otimes \epsilon_B\otimes \epsilon_A\otimes id_B$ on $(3)$, we have
		\begin{align*}
			\mathcal{R}=&\sum \mathcal{R}^1\otimes \mathcal{R}^2\otimes \mathcal{R}^3\otimes \mathcal{R}^4\\
			=&\sum {\mathcal{R}^1}\tensor{\mathcal{\tilde{R}}}{^1_{\sigma_1}}\epsilon_B( {\mathcal{R}^2}_{\sigma_1})\epsilon_B({\mathcal{\tilde{R}}^2})\otimes\epsilon_A({\mathcal{\bar{R}}^1})\epsilon_A({\mathcal{\bar{\bar{R}}}^1}_{\sigma_2})\tensor{\mathcal{\bar{R}}}{^2_{\sigma_2}}{\mathcal{\bar{\bar{R}}}^2}\otimes {\mathcal{\tilde{R}}^3}{\mathcal{\bar{\bar{R}}}^3}_{\sigma_3}\epsilon_B( \tensor{\mathcal{\tilde{R}}}{^4_{\sigma_3}})\\
			&\cdot\epsilon_B({\mathcal{\bar{\bar{R}}}^4})\otimes\epsilon_A( {\mathcal{R}^3})\epsilon_A(\tensor{\mathcal{\bar{R}}}{^3_{\sigma_4}}){\mathcal{R}^4}_{\sigma_4}{\mathcal{\bar{R}}^4}\\
			=&\sum {\mathcal{R}^1}\tensor{\mathcal{\tilde{R}}}{^1_{\sigma_1}}\epsilon_B( {\mathcal{R}^2}_{\sigma_1})\epsilon_B({\mathcal{\tilde{R}}^2})\otimes\epsilon_A({\mathcal{\bar{R}}^1})\epsilon_A({\mathcal{\bar{\bar{R}}}^1})\tensor{\mathcal{\bar{R}}}{^2}{\mathcal{\bar{\bar{R}}}^2}\otimes {\mathcal{\tilde{R}}^3}{\mathcal{\bar{\bar{R}}}^3}_{\sigma_3}\epsilon_B( \tensor{\mathcal{\tilde{R}}}{^4_{\sigma_3}})\\
			&\cdot\epsilon_B({\mathcal{\bar{\bar{R}}}^4})\otimes\epsilon_A( {\mathcal{R}^3})\epsilon_A(\tensor{\mathcal{\bar{R}}}{^3}){\mathcal{R}^4}{\mathcal{\bar{R}}^4}\quad(\text{since $\sigma$ is right conormal})\\
			=&\sum {\mathcal{R}^1}\tensor{\mathcal{\tilde{R}}}{^1_{\sigma_1}}\epsilon_B( {\mathcal{R}^2}_{\sigma_1})\epsilon_B({\mathcal{\tilde{R}}^2})\otimes W^1Y^1\otimes {\mathcal{\tilde{R}}^3}\epsilon_B( \tensor{\mathcal{\tilde{R}}}{^4_{\sigma_3}})\tensor{Y}{^2_{\sigma_3}}\otimes\epsilon_A(\tensor{\mathcal{R}}{^3}){\mathcal{R}^4}W^2\\
			\stackrel{(2)}{=}&\sum \tensor{\mathcal{\tilde{R}}}{^1_{\tau}}{\mathcal{R}^1}\epsilon_B( \mathcal{R}^2)\epsilon_B({\mathcal{\tilde{R}}^2})\otimes W^1Y^1\otimes {\mathcal{\tilde{R}}^3}\epsilon_B( \tensor{\mathcal{\tilde{R}}}{^4_{\sigma}})\tensor{Y}{^2_{\sigma}}\otimes1_{B\tau}\epsilon_A(\tensor{\mathcal{R}}{^3}){\mathcal{R}^4}W^2\\
			\stackrel{}{=}&\sum \tensor{\mathcal{\tilde{R}}}{^1_{\tau}}\epsilon_B({\mathcal{\tilde{R}}^2})X^1\otimes W^1Y^1\otimes {\mathcal{\tilde{R}}^3}\epsilon_B( \tensor{\mathcal{\tilde{R}}}{^4_{\sigma}})\tensor{Y}{^2_{\sigma}}\otimes1_{B\tau}X^2W^2\\
			\stackrel{(4)}{=}&\sum (\tensor{Z}{^1_{\tau_1}}{\bar{X}}^1)_{\tau_2}X^1\otimes W^1Y^1\otimes Z^2\epsilon_B((1_{B\tau_1}{\bar{X}}^2)_{\sigma})\tensor{Y}{^2_{\sigma}}\otimes1_{B\tau_2}X^2W^2\\
			\stackrel{}{=}&\sum \tensor{Z}{^1_{\tau_1\tau_2}}\tensor{\bar{X}}{^1_{\tau_3}}X^1\otimes W^1Y^1\otimes Z^2\tensor{Y}{^2_{\sigma_1\sigma_2}}\epsilon_B(1_{B\tau_1\sigma_2}\tensor{\bar{X}}{^2_{\sigma_1}})\otimes1_{B\tau_2}1_{B\tau_3}X^2W^2.
		\end{align*}
	\end{proof}

	\begin{prop}\label{thm:3.5}
		Let $A{_\tau\times_\sigma}B$ be a smash biproduct bialgebra for which $\sigma$ is right conormal. Suppose that there exist normalized elements $W=\sum W^1\otimes W^2\in B\otimes B$, $X=\sum X^1\otimes X^2\in A\otimes B$, $Y=\sum Y^1\otimes Y^2\in B\otimes A$ and $Z=\sum Z^1\otimes Z^2\in A\otimes A$ such that $\alpha=\sum \tensor{Z}{^1_{\tau_1\tau_2}}\tensor{\bar{X}}{^1_{\tau_3}}X^1\otimes W^1Y^1\otimes Z^2\tensor{Y}{^2_{\sigma_1\sigma_2}}\epsilon_B(1_{B\tau_1\sigma_2}\tensor{\bar{X}}{^2_{\sigma_1}})\otimes1_{B\tau_2}1_{B\tau_3}X^2W^2$ is a quasitriangular structure of $A{_\tau\times_\sigma}B$. Then we have the following identities:
		\begin{align*}
			C1)\quad\sum& \tensor{Z}{^1_{\tau_1\tau_2}}\tensor{\bar{X}}{^1_{\tau_3}}X^1a_{(1)\sigma_3}\epsilon_B( (W^1Y^1)_{\sigma_3})\otimes Z^2\tensor{Y}{^2_{\sigma_1\sigma_2}}a_{(2)\sigma_4}\epsilon_B(1_{B\tau_1\sigma_2}\tensor{\bar{X}}{^2_{\sigma_1}})\\
			&\cdot\epsilon_B((1_{B\tau_2}1_{B\tau_3}X^2W^2)_{\sigma_4})=\sum a_{(2)\tau}\tensor{Z}{^1}\otimes a_{(1)}\tensor{Z}{^2_\sigma}\epsilon_B(1_{B\tau\sigma});\\
			C2)\quad\sum&X^1a_\sigma \epsilon_B(\tensor{W}{^1_\sigma})\otimes X^2W^2=\sum a_\tau X^1\otimes1_{B\tau}\tensor{X}{^2};\\
			C3)\quad\sum&W^1Y^11_{B\tau}\otimes Y^2a_{\tau\sigma}\epsilon_B(\tensor{W}{^2_\sigma})=\sum Y^1\otimes aY^2;\\
			C4)\quad\sum&Z^1\otimes Z^2\epsilon_B(b)=\sum1_{A\tau}\tensor{Z}{^1_{\sigma_1}}\otimes\tensor{Z}{^2_{\sigma_2}}\epsilon_B(b_{(1)\tau\sigma_2})\epsilon_B(b_{(2)\sigma_1});\\
			C5)\quad\sum&X^1\otimes X^2b=\sum1_{A\tau}\tensor{X}{^1_{\sigma}}\epsilon_B(b_{(2)\sigma})\otimes b_{(1)\tau}X^2;\\
			C6)\quad\sum&W^1Y^1b_\tau\otimes Y^21_{A\tau\sigma}\epsilon_B(\tensor{W}{^2_\sigma})=\sum b_{\sigma} Y^1\otimes \tensor{Y}{^2_\sigma};\\
			C7)\quad\sum&W^1b_{(1)}\otimes W^2b_{(2)}=\sum b_{(2)}W^1\otimes b_{(1)}W^2;\\
			C8)\quad\sum&\tensor{Z}{^1_{(1)}}\otimes \tensor{Z}{^1_{(2)}}\otimes Z^2=\sum \tensor{Z}{^1_\tau}X^1\otimes\bar{Z}^1\otimes Z^2\tensor{\bar{Z}}{^2_\sigma}\epsilon_B((1_{B\tau}\tensor{X}{^2})_{\sigma});\\
			C9)\quad\sum&\tensor{X}{^1_{(1)}}\otimes \tensor{X}{^1_{(2)}}\otimes X^2=\sum X^1\otimes{\bar{X}}^1\otimes X^2{\bar{X}}^2;\\
			C10)\quad\sum&\tensor{Y}{^1_{\tau_1}}\otimes (\tensor{Z}{^1_{\tau_2}}X^1)_{\tau_1}\otimes Z^2\tensor{Y}{^2_\sigma}\epsilon_B((1_{B\tau_2}X^2)_{\sigma})=\sum W^1Y^1\otimes Z^1\\
			&\otimes Y^2\tensor{Z}{^2_\sigma}\epsilon_B(\tensor{W}{^2_\sigma});\\
			C11)\quad\sum&\tensor{W}{^1_\tau}\otimes \tensor{X}{^1_\tau}\otimes X^2W^2=\sum W^1\otimes X^1\otimes W^2X^2;\\
			C12)\quad\sum&\tensor{Y}{^1_{(1)}}\otimes \tensor{Y}{^1_{(2)}}\otimes Y^2=\sum W^1Y^1\otimes {\bar{Y}}^1\otimes Y^2\tensor{\bar{Y}}{^2_\sigma}\epsilon_B(\tensor{W}{^2_\sigma});\\
			C13)\quad\sum&\tensor{W}{^1_{(1)}}\otimes \tensor{W}{^1_{(2)}}\otimes W^2=\sum W^1\otimes{\bar{W}}^1\otimes W^2{\bar{W}}^2;\\
			C14)\quad\sum&Z^1\otimes\tensor{Z}{^2_{(1)}}\otimes \tensor{Z}{^2_{(2)}}=\sum \tensor{Z}{^1_\tau}X^1\tensor{\bar{Z}}{^1_{\sigma_1}}\epsilon_B(\tensor{Y}{^1_{\sigma_1}})\otimes{\bar{Z}}^2\otimes Z^2\tensor{Y}{^2_{\sigma_2}}\\
			&\cdot\epsilon_B((1_{B\tau}\tensor{X}{^2})_{\sigma_2});\\
			C15)\quad\sum&\tensor{Z}{^1_{\tau_1}} X^1\otimes (1_{B\tau_1}\tensor{X}{^2})_{\tau_2}\otimes\tensor{Z}{^2_{\tau_2}}=\sum \tensor{Z}{^1_\tau}X^1\tensor{\bar{X}}{^1_{\sigma_1}}\epsilon_B(\tensor{Y}{^1_{\sigma_1}})\otimes {\bar{X}}^2\\
			&\otimes Z^2\tensor{Y}{^2_{\sigma_2}}\epsilon_B((1_{B\tau}\tensor{X}{^2})_{\sigma_2});\\
			C16)\quad\sum&X^1\otimes\tensor{X}{^2_{(1)}}\otimes \tensor{X}{^2_{(2)}}=\sum X^1\tensor{\bar{X}}{^1_\sigma}\epsilon_B(\tensor{W}{^1_\sigma}) \otimes{\bar{X}}^2\otimes X^2W^2;\\
			C17)\quad\sum&Y^1\otimes\tensor{Y}{^2_{(1)}}\otimes \tensor{Y}{^2_{(2)}}=\sum Y^1{\bar{Y}}^1\otimes {\bar{Y}}^2\otimes Y^2;\\
			C18)\quad\sum&W^1Y^1\otimes\tensor{W}{^2_\tau}\otimes \tensor{Y}{^2_\tau}=\sum Y^1W^1 \otimes W^2\otimes Y^2;\\
			C19)\quad\sum&W^1\otimes\tensor{W}{^2_{(1)}}\otimes \tensor{W}{^2_{(2)}}=\sum W^1{\bar{W}}^1 \otimes {\bar{W}}^2\otimes W^2.
		\end{align*}
	\end{prop}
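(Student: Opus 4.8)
The plan is to obtain each of $C1)$--$C19)$ by feeding the structural axioms $(QT2)$, $(QT3)$, $(QT4)$ for $\alpha$ into suitable projections of the tensor legs onto $A$ and $B$ via $\pi_A=id_A\otimes\epsilon_B$ and $\pi_B=\epsilon_A\otimes id_B$. Since $\alpha$ is assumed quasitriangular it obeys $(QT1)$--$(QT4)$, but the normalizations $(N1)$--$(N4)$ read off from $(QT1)$ are already part of the hypothesis, so $(QT1)$ contributes nothing new. Reading the free variables and coproduct patterns in the list, I expect the bookkeeping to be: $(QT4)$ on $h=a\otimes 1_B$ gives $C1)$--$C3)$ and on $h=1_A\otimes b$ gives $C4)$--$C7)$; the axiom $(QT2)$, which comultiplies the first tensorand, gives the six identities $C8)$--$C13)$ (whose left-hand sides carry a coproduct on the first factor); and $(QT3)$, which comultiplies the second tensorand, gives $C14)$--$C19)$.

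The computation will rest on three interaction formulas between the projections and the smash structure. First, by Remark \ref{Rmk:3.3} the map $\pi_B$ is both an algebra and a coalgebra surjection, so it passes through products and coproducts unchanged. Second, because $\tau$ is left and right conormal for every smash biproduct, a short check gives $(\pi_A\otimes\pi_A)\circ\varDelta=\varDelta_A\circ\pi_A$ and $(\pi_B\otimes\pi_B)\circ\varDelta=\varDelta_B\circ\pi_B$; this collapse is what turns the left-hand coproducts of $(QT2)$ and $(QT3)$ into honest coproducts of the factor elements and produces the clean left-hand sides such as $\sum Z^1_{(1)}\otimes Z^1_{(2)}\otimes Z^2$ in $C8)$ and $\sum Z^1\otimes Z^2_{(1)}\otimes Z^2_{(2)}$ in $C14)$. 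Third, and decisively, $\pi_A$ is \emph{not} multiplicative: for $x=a\otimes b$ and $y=a'\otimes b'$ one has $\pi_A(xy)=aa'_\sigma\epsilon_B(b_\sigma)\epsilon_B(b')$, so projecting a product onto $A$ drags in the $B$-components through $\sigma$.

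This last fact is the source of the mixed right-hand sides. When one projects the product leg of $\alpha_{13}\alpha_{23}$ (for $(QT2)$) or $\alpha_{13}\alpha_{12}$ (for $(QT3)$) onto $A$, the non-multiplicativity of $\pi_A$ forces in the $B$-leg of a neighbouring tensorand; since all four factor elements are joint projections of the single tensor $\alpha$, this re-couples $Z$ with $X$, or $Y$ with $W$, and reintroduces the twists $\tau,\sigma$ — exactly as in $\sum Z^1_\tau X^1\otimes\bar{Z}^1\otimes Z^2\bar{Z}^2_\sigma\epsilon_B((1_{B\tau}X^2)_\sigma)$ on the right of $C8)$. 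Concretely I would substitute the reconstruction of Proposition \ref{thm:3.4} for every occurrence of $\alpha$, expand the products and coproducts through the smash formulas $(1.1)$--$(1.4)$, and collapse the result using the right conormality of $\sigma$, the derived identities $(B4.1)$, $(B6.1)$, $(B7.1)$, $(B8.1)$, and the normalizations $(N1)$--$(N4)$. For $(QT4)$ I would rewrite $\varDelta(a\otimes 1_B)$ and $\varDelta^{cop}(a\otimes 1_B)$ explicitly through $(1.3)$, multiply against $\alpha$, and apply the three surviving patterns $\pi_A\otimes\pi_A$, $\pi_A\otimes\pi_B$, $\pi_B\otimes\pi_A$ to peel off $C1)$, $C2)$, $C3)$; the remaining pattern $\pi_B\otimes\pi_B$ annihilates the datum $a$ and reduces to normalization, whereas for $h=1_A\otimes b$ all four patterns survive and yield $C4)$--$C7)$.

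The hard part will not be any single manipulation but the index bookkeeping: keeping the many copies $\tau_1,\tau_2,\dots$ and $\sigma_1,\sigma_2,\dots$ correctly threaded as the projections permute and contract them. The genuine obstacle is to verify, for each of the eight triples of projections drawn from $\{\pi_A,\pi_B\}$ applied to $(QT2)$ and to $(QT3)$, that the twisted expression produced by projecting the product legs actually collapses to the stated closed form and does not leave a strictly larger residue; this is precisely the point where the right-conormality of $\sigma$ is indispensable, since it is what lets the stray $\epsilon_A$'s and $\sigma$-twists be absorbed. I also expect two of the eight triples in each of $(QT2)$ and $(QT3)$ to reduce to normalization and be discarded, leaving exactly the six stated identities in each case.
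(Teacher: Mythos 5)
Your proposal is correct and follows essentially the same route as the paper: apply $(QT4)$ to $\alpha$ specialized at $a\otimes 1_B$ and $1_A\otimes b$ and project with the patterns built from $\pi_A$, $\pi_B$ to obtain $C1)$--$C7)$, then apply the six nontrivial projection patterns to $(QT2)$ and $(QT3)$ to obtain $C8)$--$C13)$ and $C14)$--$C19)$ respectively, with the discarded patterns yielding only tautologies (in the paper these are identities obtained by renaming copies of the same factor element, rather than normalizations, but this is immaterial).
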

	\begin{proof}
		By $(QT4)$ for $\alpha$, for $\forall a\in A$, $b\in B$ we have
		\begin{align}
			\sum&\tensor{Z}{^1_{\tau_1\tau_2}}\tensor{\bar{X}}{^1_{\tau_3}}X^1a_{(1)\sigma_3}\otimes (W^1Y^1)_{\sigma_3}b_{(1)\tau_4}\otimes Z^2\tensor{Y}{^2_{\sigma_1\sigma_2}}a_{(2)\tau_4\sigma_4}\\
			&\cdot\epsilon_B(1_{B\tau_1\sigma_2}\tensor{\bar{X}}{^2_{\sigma_1}})\otimes (1_{B\tau_2}1_{B\tau_3}X^2W^2)_{\sigma_4}b_{(2)}\nonumber\\
			=\sum& a_{(2)\tau_4}(\tensor{Z}{^1_{\tau_1\tau_2}}\tensor{\bar{X}}{^1_{\tau_3}}X^1)_{\sigma_3}\otimes b_{(2)\sigma_3}W^1Y^1\otimes a_{(1)}(Z^2\tensor{Y}{^2_{\sigma_1\sigma_2}})_{\sigma_4}\nonumber\\
			&\cdot\epsilon_B(1_{B\tau_1\sigma_2}\tensor{\bar{X}}{^2_{\sigma_1}})\otimes b_{(1)\tau_4\sigma_4}	1_{B\tau_2}1_{B\tau_3}X^2W^2\nonumber.
		\end{align}
		Set $b=1_B$, then
		\begin{align}
			\sum&\tensor{Z}{^1_{\tau_1\tau_2}}\tensor{\bar{X}}{^1_{\tau_3}}X^1a_{(1)\sigma_3}\otimes (W^1Y^1)_{\sigma_3}1_{B\tau_4}\otimes Z^2\tensor{Y}{^2_{\sigma_1\sigma_2}}a_{(2)\tau_4\sigma_4}\\
			&\cdot\epsilon_B(1_{B\tau_1\sigma_2}\tensor{\bar{X}}{^2_{\sigma_1}})\otimes (1_{B\tau_2}1_{B\tau_3}X^2W^2)_{\sigma_4}\nonumber\\
			=\sum& a_{(2)\tau_4}\tensor{Z}{^1_{\tau_1\tau_2}}\tensor{\bar{X}}{^1_{\tau_3}}X^1\otimes W^1Y^1\otimes a_{(1)}(Z^2\tensor{Y}{^2_{\sigma_1\sigma_2}})_{\sigma_3}\epsilon_B(1_{B\tau_1\sigma_2}\tensor{\bar{X}}{^2_{\sigma_1}})\nonumber\\
			&\otimes 1_{B\tau_4\sigma_3}1_{B\tau_2}1_{B\tau_3}X^2W^2\nonumber.
		\end{align}
		By applying $id_A\otimes\epsilon_B\otimes id_A\otimes\epsilon_B$, $id_A\otimes\epsilon_B\otimes\epsilon_A\otimes id_B$ and $\epsilon_A\otimes id_B\otimes id_A\otimes \epsilon_B$, we get $C1)$, $C2)$ and $C3)$, respectively. Note that by applying $\epsilon_A\otimes id_B\otimes \epsilon_A\otimes id_B$, we get the identity $W\equiv W$.
		
		If we set $a=1_A$ in $(5)$, we get
		\begin{align}
			\sum&\tensor{Z}{^1_{\tau_1\tau_2}}\tensor{\bar{X}}{^1_{\tau_3}}X^1\otimes W^1Y^1b_{(1)\tau_4}\otimes Z^2\tensor{Y}{^2_{\sigma_1\sigma_2}}1_{A\tau_4\sigma_3}\epsilon_B(1_{B\tau_1\sigma_2}\tensor{\bar{X}}{^2_{\sigma_1}})\\
			&\otimes (1_{B\tau_2}1_{B\tau_3}X^2W^2)_{\sigma_3}b_{(2)}\nonumber\\
			=\sum& 1_{A\tau_4}(\tensor{Z}{^1_{\tau_1\tau_2}}\tensor{\bar{X}}{^1_{\tau_3}}X^1)_{\sigma_3}\otimes b_{(2)\sigma_3}W^1Y^1\otimes(Z^2\tensor{Y}{^2_{\sigma_1\sigma_2}})_{\sigma_4}\epsilon_B(1_{B\tau_1\sigma_2}\tensor{\bar{X}}{^2_{\sigma_1}})\nonumber\\
			&\otimes b_{(1)\tau_4\sigma_4}1_{B\tau_2}1_{B\tau_3}X^2W^2\nonumber.
		\end{align}
		Similarly by applying $id_A\otimes\epsilon_B\otimes id_A\otimes \epsilon_B$, $id_A\otimes\epsilon_B\otimes\epsilon_A\otimes id_B$, $\epsilon_A\otimes id_B\otimes id_A\otimes \epsilon_B$ and $\epsilon_A\otimes id_B\otimes \epsilon_A\otimes id_B$, we get $C4)$, $C5)$, $C6)$ and $C7)$, respectively.
		
		By $(QT2)$ for $\alpha$, we have
		\begin{align}
			\sum&(\tensor{Z}{^1_{\tau_1\tau_2}}\tensor{\bar{X}}{^1_{\tau_3}}X^1)_{(1)}\otimes(W^1Y^1)_{(1)\tau_4}\otimes(\tensor{Z}{^1_{\tau_1\tau_2}}\tensor{\bar{X}}{^1_{\tau_3}}X^1)_{(2)\tau_4}\otimes (W^1Y^1)_{(2)}\\
			&\otimes Z^2\tensor{Y}{^2_{\sigma_1\sigma_2}}\epsilon_B(1_{B\tau_1\sigma_2}\tensor{\bar{X}}{^2_{\sigma_1}})\otimes1_{B\tau_2}1_{B\tau_3}X^2W^2\nonumber\\
			=\sum&\tensor{Z}{^1_{\tau_1\tau_2}}\tensor{\bar{X}}{^1_{\tau_3}}\tensor{X}{^1}\otimes W^1Y^1\otimes\tensor{\bar{Z}}{^1_{\tau_4\tau_5}}\tensor{\tilde{X}}{^1_{\tau_6}}\tensor{\hat{X}}{^1}\otimes \bar{W}^1\bar{Y}^1\nonumber\\
			&\otimes Z^2\tensor{Y}{^2_{\sigma_1\sigma_2}}(\tensor{\bar{Z}}{^2}\tensor{\bar{Y}}{^2_{\sigma_3\sigma_4}})_{\sigma_5}\epsilon_B(1_{B\tau_1\sigma_2}\tensor{\bar{X}}{^2_{\sigma_1}})\epsilon_B(1_{B\tau_4\sigma_4}\tensor{\tilde{X}}{^2_{\sigma_3}})\nonumber\\
			&\otimes(1_{B\tau_2}1_{B\tau_3}X^2W^2)_{\sigma_5}1_{B\tau_5}1_{B\tau_6}\tensor{\hat{X}}{^2}\tensor{\bar{W}}{^2}\nonumber.
		\end{align}
		By applying $id_A\otimes\epsilon_B\otimes id_A\otimes\epsilon_B\otimes id_A\otimes\epsilon_B$, $id_A\otimes\epsilon_B\otimes id_A\otimes\epsilon_B\otimes\epsilon_A\otimes id_B$, $\epsilon_A\otimes id_B\otimes id_A\otimes\epsilon_B\otimes id_A\otimes\epsilon_B$, $\epsilon_A\otimes id_B\otimes id_A\otimes\epsilon_B\otimes\epsilon_A\otimes id_B$, $\epsilon_A\otimes id_B\otimes\epsilon_A\otimes id_B\otimes id_A\otimes\epsilon_B$ and $\epsilon_A\otimes id_B\otimes\epsilon_A\otimes id_B\otimes\epsilon_A\otimes id_B$, we get $C8)$, $C9)$, $C10)$, $C11)$, $C12)$ and $C13)$, respectively. Note that  $id_A\otimes\epsilon_B\otimes\epsilon_A\otimes id_B\otimes id_A\otimes\epsilon_B$ gives the identity $\sum \tensor{Z}{^1_\tau}{\bar{X}}^1\otimes Y^1\otimes Z^2\tensor{Y}{^2_{\sigma_1\sigma_2}}\epsilon_B(1_{B\tau\sigma_2}\tensor{\bar{X}}{^2_{\sigma_1}})=\sum \tensor{Z}{^1_\tau}X^1\otimes {\bar{Y}}^1\otimes Z^2\tensor{\bar{Y}}{^2_\sigma}\epsilon_B((1_{B\tau}X^2)_{\sigma})$ and $id_A\otimes\epsilon_B\otimes\epsilon_A\otimes id_B\otimes\epsilon_A\otimes id_B$ gives the identity $\sum X^1\otimes W^1\otimes X^2W^2=\sum X^1\otimes {\bar{W}}^1\otimes X^2{\bar{W}}^2$.

		By $(QT3)$ for $\alpha$, we have
		\begin{align}
			\sum&\tensor{Z}{^1_{\tau_1\tau_2}}\tensor{\bar{X}}{^1_{\tau_3}}X^1\otimes W^1Y^1\otimes (Z^2\tensor{Y}{^2_{\sigma_1\sigma_2}})_{(1)}\epsilon_B(1_{B\tau_1\sigma_2}\tensor{\bar{X}}{^2_{\sigma_1}})\\
			&\otimes(1_{B\tau_2}1_{B\tau_3}X^2W^2)_{(1){\tau_4}}\otimes(Z^2\tensor{Y}{^2_{\sigma_1\sigma_2}})_{(2){\tau_4}}\otimes(1_{B\tau_2}1_{B\tau_3}X^2W^2)_{(2)}\nonumber\\
			=\sum&\tensor{Z}{^1_{\tau_1\tau_2}}\tensor{\bar{X}}{^1_{\tau_3}}X^1(\tensor{\bar{Z}}{^1_{\tau_4\tau_5}}\tensor{\tilde{X}}{^1_{\tau_6}}{\hat{X}}^1)_{\sigma_5}\otimes (W^1Y^1)_{\sigma_5}\bar{W}^1\bar{Y}^1\nonumber\\
			&\otimes {\bar{Z}}^2\tensor{\bar{Y}}{^2_{\sigma_3\sigma_4}}\epsilon_B(1_{B\tau_4\sigma_4}\tensor{\tilde{X}}{^2_{\sigma_3}})\otimes1_{B\tau_5}1_{B\tau_6}{\hat{X}}^2{\bar{W}}^2\nonumber\\
			&\otimes Z^2\tensor{Y}{^2_{\sigma_1\sigma_2}}\epsilon_B(1_{B\tau_1\sigma_2}\tensor{\bar{X}}{^2_{\sigma_1}})\otimes1_{B\tau_2}1_{B\tau_3}X^2W^2\nonumber.
		\end{align}
		Similarly by applying $id_A\otimes\epsilon_B\otimes id_A\otimes\epsilon_B\otimes id_A\otimes\epsilon_B$, $id_A\otimes\epsilon_B\otimes\epsilon_A\otimes id_B\otimes id_A\otimes\epsilon_B$, $id_A\otimes\epsilon_B\otimes\epsilon_A\otimes id_B\otimes\epsilon_A\otimes id_B$, $\epsilon_A\otimes id_B\otimes id_A\otimes\epsilon_B\otimes id_A\otimes\epsilon_B$, $\epsilon_A\otimes id_B\otimes\epsilon_A\otimes id_B\otimes id_A\otimes\epsilon_B$ and $\epsilon_A\otimes id_B\otimes\epsilon_A\otimes id_B\otimes\epsilon_A\otimes id_B$, we get $C14)$, $C15)$, $C16)$, $C17)$, $C18)$ and $C19)$, respectively. Note that $id_A\otimes\epsilon_B\otimes id_A\otimes\epsilon_B\otimes\epsilon_A\otimes id_B$ gives the identity $\sum \tensor{Z}{^1_\tau} X^1\otimes Z^2\otimes 1_{B\tau}\tensor{X}{^2}=\sum \tensor{\bar{Z}}{^1_\tau} X^1\otimes {\bar{Z}}^2\otimes 1_{B\tau}\tensor{X}{^2}$ and $\epsilon_A\otimes id_B\otimes id_A\otimes\epsilon_B\otimes\epsilon_A\otimes id_B$ gives the identity $\sum W^1Y^1\otimes Y^2\otimes W^2=\sum W^1\bar{Y}^1\otimes \bar{Y}^2\otimes W^2$.
	\end{proof}
	
	We see that $C7),C13)$ and $C19)$ altogether make $W$ a quasitriangular structure of bialgebra $B$. Since $\pi_B:A{_\tau\times_\sigma}B\rightarrow B$ is a bialgebra surjection, it's direct that $W=(\pi_B\otimes\pi_B)(\mathcal{R})$ is a quasitriangular structure of $B$.
	
	Next we prove that we can contrarily reconstruct the quasitriangular structure from the $W$, $X$, $Y$ and $Z$.
	
	\begin{thm}\label{thm:3.6}
		Let $A{_\tau\times_\sigma}B$ be a smash biproduct bialgebra for which $\sigma$ is right conormal. Suppose that there exist normalized elements $W=\sum W^1\otimes W^2\in B\otimes B$, $X=\sum X^1\otimes X^2\in A\otimes B$, $Y=\sum Y^1\otimes Y^2\in B\otimes A$ and $Z=\sum Z^1\otimes Z^2\in A\otimes A$ satisfying $C1)-C19)$ above. Then $\alpha=\sum \tensor{Z}{^1_{\tau_1\tau_2}}\tensor{\bar{X}}{^1_{\tau_3}}X^1\otimes W^1Y^1\otimes Z^2\tensor{Y}{^2_{\sigma_1\sigma_2}}\epsilon_B(1_{B\tau_1\sigma_2}\tensor{\bar{X}}{^2_{\sigma_1}})\otimes1_{B\tau_2}1_{B\tau_3}X^2W^2$ is a quasitriangular structure of $A{_\tau\times_\sigma}B$.
	\end{thm}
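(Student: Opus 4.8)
The plan is to verify the four axioms $(QT1)$–$(QT4)$ for $\alpha$ one at a time, essentially reading the computations in the proof of Proposition \ref{thm:3.5} in reverse: there each $Ci)$ was obtained by applying one of the corner projections $\pi_A,\pi_B$ to a full tensor identity, and here the task is to recover those full identities from the $Ci)$. I would first dispose of $(QT1)$, which is easiest: applying $\epsilon_A\otimes\epsilon_B$ to either copy of $A{_\tau\times_\sigma}B$ in $\alpha$ and simplifying the resulting $\tau$- and $\sigma$-twists by the counit axioms and by right conormality collapses the expression to $1_A\otimes 1_B$, using exactly the normalization hypotheses $(N1)$–$(N4)$ on $W,X,Y,Z$.

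For $(QT4)$ I would first reduce to algebra generators. Since $A{_\tau\times_\sigma}B$ is a bialgebra, both $\varDelta$ and $\varDelta^{cop}$ are algebra maps, so the set of $h$ for which $\alpha\varDelta(h)=\varDelta^{cop}(h)\alpha$ holds is a subalgebra; as every element factors as $a\otimes b=(a\otimes 1_B)(1_A\otimes b)$ (this uses only the smash product identity $\sum 1_{A\sigma}\otimes 1_{B\sigma}=1_A\otimes 1_B$), it suffices to check $(QT4)$ on the generators $a\otimes 1_B$ and $1_A\otimes b$. These two cases are precisely the two master identities obtained by setting $b=1_B$ and $a=1_A$ in the general $(QT4)$ relation for $\alpha$, i.e.\ the identities whose corner projections produced $C1)$–$C3)$ and $C4)$–$C7)$ in Proposition \ref{thm:3.5}. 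Dually, $(QT2)$ and $(QT3)$ amount to the two master identities in the threefold tensor product whose corner projections yielded $C8)$–$C13)$ and $C14)$–$C19)$. Thus the whole theorem comes down to promoting each family of projected identities back to its single master identity.

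The heart of the argument, and the main obstacle, is this promotion step, because the four (for $(QT4)$) or eight (for $(QT2)$, $(QT3)$) corner projections do not determine an arbitrary element of $A\otimes B\otimes A\otimes B$, respectively of the sixfold tensor product. What rescues the argument is the rigid reconstructible shape of $\alpha$ together with right conormality. Concretely, a projection onto an $A$-slot is applied through $\epsilon_A$, and $(\mathrm{RCN})$, $\sum\epsilon_A(a_\sigma)b_\sigma=\epsilon_A(a)b$, lets $\epsilon_A$ pass through and untwist any $\sigma$ entangling that $A$-slot with a neighbouring $B$-slot; the $\tau$-twists are then rebuilt from $(B5)$, $(B4.1)$ and $(B8.1)$, and the $\epsilon_B$-contractions are restored from the smash biproduct identities $(B1)$–$(B7)$ and the module/comodule identities $(1.1)$–$(1.4)$. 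The effect is that both sides of each master identity lie in the image of the reconstruction map of Proposition \ref{thm:3.4} (in its two- or three-copy form), which is a section of the corner-projection map on that image; hence agreement of the corners, which is exactly the content of the relevant $Ci)$, forces agreement of the full elements.

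I therefore expect the genuine difficulty to be bookkeeping rather than conceptual: one must track a growing number of independent copies $\tau_1,\tau_2,\dots$ and $\sigma_1,\sigma_2,\dots$ while repeatedly invoking $(1.1)$–$(1.4)$ to merge or split them, and check that after restoring each $\epsilon_B$-contraction the reconstructed master identity is literally the one demanded by the corresponding axiom. I would organize the work by treating $(QT4)$ on $a\otimes 1_B$ first (using $C1)$–$C3)$), then $(QT4)$ on $1_A\otimes b$ (using $C4)$–$C7)$), then $(QT2)$ (using $C8)$–$C13)$), and finally $(QT3)$ (using $C14)$–$C19)$), in each case expanding both sides into the corner-reconstructed form before comparing.
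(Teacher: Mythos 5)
Your overall skeleton matches the paper's: dispose of $(QT1)$ by normalization, reduce $(QT4)$ to the generators $a\otimes 1_B$ and $1_A\otimes b$ (which is exactly the paper's specialization $b=1_B$, $a=1_A$ in its equation $(5)$), and then try to recover each ``master identity'' from the family of projected identities $Ci)$. The gap is in how you justify the recovery. You assert that both sides of each master identity lie in the image of the reconstruction map of Proposition \ref{thm:3.4}, that this map is a section of the corner-projection map on its image, and that therefore agreement of the corners forces agreement of the full elements. This injectivity principle is not available: the corner projections $(\pi_A\otimes\pi_A),(\pi_A\otimes\pi_B),(\pi_B\otimes\pi_A),(\pi_B\otimes\pi_B)$ do not determine an element of $(A\otimes B)\otimes(A\otimes B)$, and the only thing that confines an element to the ``reconstructible'' locus is Proposition \ref{thm:3.4} itself --- whose proof uses $(QT2)$--$(QT4)$ for the element in question. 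To apply your argument to, say, $\alpha\varDelta(a\otimes 1_B)$ you would have to first prove that this element is reconstructed from its own corners, which is essentially the statement you are trying to establish; as written the step is circular, and you give no independent proof that the two sides of a master identity lie in a common subspace on which the corner projections are jointly injective.

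The paper avoids any such abstract principle and instead runs an explicit ``build-upwards'' induction on how many tensor slots remain projected. For $(QT3)$, for instance, it first proves the half-projected identity $(*)$ (corresponding to $id_A\otimes id_B\otimes\epsilon_A\otimes id_B\otimes\epsilon_A\otimes id_B$) by a direct computation from $C16)$, $C19)$, $C13)$ together with $(B6.1)$ and $(B7.1)$: the comultiplication of the product $X^2W^2$ is split by $(B6.1)$, the two factors are rewritten by $C16)$ and $C19)$ separately, and the pieces are reassembled using $C13)$ and $(B7.1)$. It then proves $(**)$ similarly, and obtains the identity with only one slot projected by substituting $(*)$ and $(**)$ as rewriting rules inside a further expansion that again invokes $(1.2)$, $(1.3)$, $(B7.1)$, $(B8.1)$, $C2)$ and $C13)$. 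Each promotion step is a genuine calculation, not a consequence of injectivity of the projections. So your plan identifies the correct intermediate statements but replaces the actual content of the proof --- the chains of rewritings that combine two more-projected identities into one less-projected identity --- with an unsupported uniqueness claim; to complete the argument you would need to carry out those computations, at which point you would be reproducing the paper's proof.
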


	\begin{proof}
		$\bullet$ $(QT1)$ is easy to check.
		
		$\bullet\enspace$The proof of $(QT2)$, $(QT3)$ and $(QT4)$ would be given following the strategy of “\textit{build-upwards}”. Taking $(QT3)$ for example,  by $C16)$ and $C19)$ we could prove an identity which is the result of applying $id_A\otimes id_B\otimes\epsilon_A\otimes id_B\otimes \epsilon_A\otimes id_B$ to equation $(9)$. Similarly another identity corresponding to $id_A\otimes id_B\otimes id_A\otimes\epsilon_B\otimes\epsilon_A\otimes id_B$ would hold naturally. Combining them together, we would get an identity corresponding to $id_A\otimes id_B\otimes id_A\otimes id_B\otimes\epsilon_A\otimes id_B$. Also in another line we get identities corresponding to $id_A\otimes id_B\otimes\epsilon_A\otimes id_B\otimes id_A\otimes\epsilon_B$ by $C15)$ and $C18)$, and corresponding to $id_A\otimes id_B\otimes id_A\otimes\epsilon_B\otimes id_A\otimes\epsilon_B$ by $C14)$ and $C17)$. They lead to an identity corresponding to $id_A\otimes id_B\otimes id_A\otimes id_B\otimes id_A\otimes\epsilon_B$. Finally by combining the identities in two lines, we could prove an identity corresponding to $id_A\otimes id_B\otimes id_A\otimes id_B\otimes id_A\otimes id_B$ which is $(9)$ itself. Thus we get $(QT3)$ for $\alpha$. This method will also work for $(QT2)$ and $(QT4)$.
		
		For instance, we show how to get the identities above.
		
		Identity corr. to $id_A\otimes id_B\otimes\epsilon_A\otimes id_B\otimes\epsilon_A\otimes id_B$, denoted by $(*)$:
		\begin{align*}
			(id_A\otimes&id_B\otimes\epsilon_A\otimes id_B\otimes\epsilon_A\otimes id_B)\Big(\textit{LS of (9)}\Big)\\
			=\sum&\tensor{X}{^1}\otimes \tensor{W}{^1}\otimes (\tensor{X}{^2}\tensor{W}{^2})_{(1)}\otimes(\tensor{X}{^2}\tensor{W}{^2})_{(2)}\\
			\stackrel{(B6.1)}{=}\sum&\tensor{X}{^1}\otimes \tensor{W}{^1}\otimes \tensor{X}{^2_{(1)}}\tensor{W}{^2_{(1)}}\otimes\tensor{X}{^2_{(2)}}\tensor{W}{^2_{(2)}}\\
			\stackrel{\bm{C16)}}{=}\sum&\tensor{X}{^1}\tensor{\bar{X}}{^1_{\sigma}}\epsilon_B(\tensor{\tilde{W}}{^1_{\sigma}})\otimes \tensor{W}{^1}\otimes \tensor{\bar{X}}{^2}\tensor{W}{^2_{(1)}}\otimes\tensor{X}{^2}\tensor{\tilde{W}}{^2}\tensor{W}{^2_{(2)}}\\
			\stackrel{\bm{C19)}}{=}\sum&\tensor{X}{^1}\tensor{\bar{X}}{^1_{\sigma}}\epsilon_B(\tensor{\tilde{W}}{^1_{\sigma}})\otimes \tensor{W}{^1}\tensor{\bar{W}}{^1}\otimes \tensor{\bar{X}}{^2}\tensor{\bar{W}}{^2}\otimes\tensor{X}{^2}\tensor{\tilde{W}}{^2}\tensor{W}{^2}\\
			\stackrel{C13)}{=}\sum&\tensor{X}{^1}\tensor{\bar{X}}{^1_{\sigma}}\epsilon_B(\tensor{W}{^1_{(1)\sigma}})\otimes \tensor{W}{^1_{(2)}}\tensor{\bar{W}}{^1}\otimes \tensor{\bar{X}}{^2}\tensor{\bar{W}}{^2}\otimes\tensor{X}{^2}\tensor{W}{^2}\\
			\stackrel{(B7.1)}{=}\sum&\tensor{X}{^1}\tensor{\bar{X}}{^1_{\sigma}}\otimes \tensor{W}{^1_{\sigma}}\tensor{\bar{W}}{^1}\otimes \tensor{\bar{X}}{^2}\tensor{\bar{W}}{^2}\otimes\tensor{X}{^2}\tensor{W}{^2}\\
			=(id_A&\otimes id_B\otimes\epsilon_A\otimes id_B\otimes\epsilon_A\otimes id_B)\Big(\textit{RS of (9)}\Big).\hspace{16em}
		\end{align*}
		
		Identity corr. to $id_A\otimes id_B\otimes id_A\otimes\epsilon_B\otimes\epsilon_A\otimes id_B$, denoted by $(**)$:
		\begin{align*}
			(id_A\otimes&id_B\otimes id_A\otimes\epsilon_B\otimes\epsilon_A\otimes id_B)\Big(\textit{LS of (9)}\Big)\\
			=\sum&\tensor{Z}{^1_{\tau_1\tau_2}}\tensor{\bar{X}}{^1_{\tau_3}}\tensor{X}{^1}\otimes \tensor{W}{^1}\tensor{Y}{^1}\otimes\tensor{Z}{^2}\tensor{Y}{^2_{\sigma_1\sigma_2}}\epsilon_B(1_{B\tau_1\sigma_2}\tensor{\bar{X}}{^2_{\sigma_1}})\otimes1_{B\tau_2}1_{B\tau_3}\tensor{X}{^2}\tensor{W}{^2}\\
			\stackrel{(B8.1)}{=}\sum&(\tensor{Z}{^1_{\tau_1}}\tensor{\bar{X}}{^1})_{\tau_2}\tensor{X}{^1}\otimes \tensor{W}{^1}\tensor{Y}{^1}\otimes\tensor{Z}{^2}\tensor{Y}{^2_{\sigma_1\sigma_2}}\epsilon_B(1_{B\tau_1\sigma_2}\tensor{\bar{X}}{^2_{\sigma_1}})\otimes1_{B\tau_2}\tensor{X}{^2}\tensor{W}{^2}\\	\stackrel{C2)}{=}\sum&\tensor{X}{^1}(\tensor{Z}{^1_{\tau_1}}\tensor{\bar{X}}{^1})_{\sigma_3}\epsilon_B(\tensor{\bar{W}}{^1_{\sigma_3}})\otimes \tensor{W}{^1}\tensor{Y}{^1}\otimes\tensor{Z}{^2}\tensor{Y}{^2_{\sigma_1\sigma_2}}\epsilon_B(1_{B\tau_1\sigma_2}\tensor{\bar{X}}{^2_{\sigma_1}})\otimes\tensor{X}{^2}\tensor{\bar{W}}{^2}\tensor{W}{^2}\\
			\stackrel{C13)}{=}\sum&\tensor{X}{^1}(\tensor{Z}{^1_{\tau_1}}\tensor{\bar{X}}{^1})_{\sigma_3}\epsilon_B(\tensor{W}{^1_{(1)\sigma_3}})\otimes \tensor{W}{^1_{(2)}}\tensor{Y}{^1}\otimes\tensor{Z}{^2}\tensor{Y}{^2_{\sigma_1\sigma_2}}\epsilon_B(1_{B\tau_1\sigma_2}\tensor{\bar{X}}{^2_{\sigma_1}})\otimes\tensor{X}{^2}\tensor{W}{^2}\\
			\stackrel{(B7.1)}{=}\sum&\tensor{X}{^1}(\tensor{Z}{^1_{\tau_1}}\tensor{\bar{X}}{^1})_{\sigma_3}\otimes \tensor{W}{^1_{\sigma_3}}\tensor{Y}{^1}\otimes\tensor{Z}{^2}\tensor{Y}{^2_{\sigma_1\sigma_2}}\epsilon_B(1_{B\tau_1\sigma_2}\tensor{\bar{X}}{^2_{\sigma_1}})\otimes\tensor{X}{^2}\tensor{W}{^2}\\
			=(id_A&\otimes id_B\otimes id_A\otimes\epsilon_B\otimes\epsilon_A\otimes id_B)\Big(\textit{RS of (9)}\Big).
		\end{align*}
		
		Identity corr. to $id_A\otimes id_B\otimes id_A\otimes id_B\otimes\epsilon_A\otimes id_B$, denoted by $(***)$:
		\begin{align*}
			(id_A\otimes&id_B\otimes id_A\otimes id_B\otimes\epsilon_A\otimes id_B)\Big(\textit{LS of (9)}\Big)\\
			=\sum&\tensor{Z}{^1_{\tau_1\tau_2}}\tensor{\bar{X}}{^1_{\tau_3}}\tensor{X}{^1}\otimes \tensor{W}{^1}\tensor{Y}{^1}\otimes Z^2\tensor{Y}{^2_{\sigma_1\sigma_2}}\epsilon_B((1_{B\tau_1\sigma_2}\tensor{\bar{X}}{^2_{\sigma_1}}))\\
			&\otimes(1_{B\tau_2}1_{B\tau_3}\tensor{X}{^2}\tensor{W}{^2})_{(1)}\otimes(1_{B\tau_2}1_{B\tau_3}\tensor{X}{^2}\tensor{W}{^2})_{(2)}\\
			\stackrel{(B6.1)}{=}\sum&\tensor{Z}{^1_{\tau_1\tau_2}}\tensor{\bar{X}}{^1_{\tau_3}}\tensor{X}{^1}\otimes \tensor{W}{^1}\tensor{Y}{^1}\otimes Z^2\tensor{Y}{^2_{\sigma_1\sigma_2}}\epsilon_B(1_{B\tau_1\sigma_2}\tensor{\bar{X}}{^2_{\sigma_1}})\\
			&\otimes1_{B\tau_2(1)}1_{B\tau_3(1)}(\tensor{X}{^2}\tensor{W}{^2})_{(1)}\otimes1_{B\tau_2(2)}1_{B\tau_3(2)}(\tensor{X}{^2}\tensor{W}{^2})_{(2)}\\
			\stackrel{(1.3)}{=}\sum&\tensor{Z}{^1_{\tau_1\tau_2\tau_4}}\tensor{\bar{X}}{^1_{\tau_3\tau_5}}\tensor{X}{^1}\otimes \tensor{W}{^1}\tensor{Y}{^1}\otimes Z^2\tensor{Y}{^2_{\sigma_1\sigma_2}}\epsilon_B(1_{B\tau_1\sigma_2}\tensor{\bar{X}}{^2_{\sigma_1}})\\
			&\otimes1_{B\tau_2}1_{B\tau_3}(\tensor{X}{^2}\tensor{W}{^2})_{(1)}\otimes1_{B\tau_4}1_{B\tau_5}(\tensor{X}{^2}\tensor{W}{^2})_{(2)}\\
			\stackrel{\bm{(*)}}{=}\sum&\tensor{Z}{^1_{\tau_1\tau_2\tau_4}}\tensor{\bar{X}}{^1_{\tau_3\tau_5}}\tensor{X}{^1}\tensor{\tilde{X}}{^1_{\sigma_3}}\otimes \tensor{W}{^1_{\sigma_3}}\tensor{\tilde{W}}{^1}\tensor{Y}{^1}\otimes Z^2\tensor{Y}{^2_{\sigma_1\sigma_2}}\epsilon_B(1_{B\tau_1\sigma_2}\tensor{\bar{X}}{^2_{\sigma_1}})\\
			&\otimes1_{B\tau_2}1_{B\tau_3}\tensor{\tilde{X}}{^2}\tensor{\tilde{W}}{^2}\otimes1_{B\tau_4}1_{B\tau_5}\tensor{X}{^2}\tensor{W}{^2}\\
			\stackrel{(B7.1)}{=}\sum&\tensor{Z}{^1_{\tau_1\tau_2\tau_4}}\tensor{\bar{X}}{^1_{\tau_3\tau_5}}\tensor{X}{^1}\tensor{\tilde{X}}{^1_{\sigma_3}}\otimes \epsilon_B(\tensor{W}{^1_{(1)\sigma_3}})\tensor{W}{^1_{(2)}}\tensor{\tilde{W}}{^1}\tensor{Y}{^1}\otimes Z^2\tensor{Y}{^2_{\sigma_1\sigma_2}}\\
			&\cdot\epsilon_B(1_{B\tau_1\sigma_2}\tensor{\bar{X}}{^2_{\sigma_1}})\otimes1_{B\tau_2}1_{B\tau_3}\tensor{\tilde{X}}{^2}\tensor{\tilde{W}}{^2}\otimes1_{B\tau_4}1_{B\tau_5}\tensor{X}{^2}\tensor{W}{^2}\\
			\stackrel{C13)}{=}\sum&\tensor{Z}{^1_{\tau_1\tau_2\tau_4}}\tensor{\bar{X}}{^1_{\tau_3\tau_5}}\tensor{X}{^1}\tensor{\tilde{X}}{^1_{\sigma_3}}\otimes \epsilon_B(\tensor{W}{^1_{\sigma_3}})\tensor{\bar{W}}{^1}\tensor{\tilde{W}}{^1}\tensor{Y}{^1}\otimes Z^2\tensor{Y}{^2_{\sigma_1\sigma_2}}\epsilon_B(1_{B\tau_1\sigma_2}\tensor{\bar{X}}{^2_{\sigma_1}})\\
			&\otimes1_{B\tau_2}1_{B\tau_3}\tensor{\tilde{X}}{^2}\tensor{\tilde{W}}{^2}\otimes1_{B\tau_4}1_{B\tau_5}\tensor{X}{^2}\tensor{W}{^2}\tensor{\bar{W}}{^2}\\
			\stackrel{C2)}{=}\sum&\tensor{Z}{^1_{\tau_1\tau_2\tau_4}}\tensor{\bar{X}}{^1_{\tau_3\tau_5}}\tensor{\tilde{X}}{^1_{\tau_6}}\tensor{X}{^1}\otimes \tensor{\bar{W}}{^1}\tensor{\tilde{W}}{^1}\tensor{Y}{^1}\otimes Z^2\tensor{Y}{^2_{\sigma_1\sigma_2}}\epsilon_B(1_{B\tau_1\sigma_2}\tensor{\bar{X}}{^2_{\sigma_1}})\\
			&\otimes1_{B\tau_2}1_{B\tau_3}\tensor{\tilde{X}}{^2}\tensor{\tilde{W}}{^2}\otimes1_{B\tau_4}1_{B\tau_5}1_{B\tau_6}\tensor{X}{^2}\tensor{\bar{W}}{^2}\\
			\stackrel{(B8.1)}{=}\sum&(\tensor{Z}{^1_{\tau_1\tau_2}}\tensor{\bar{X}}{^1_{\tau_3}}\tensor{\tilde{X}}{^1})_{\tau_4}\tensor{X}{^1}\otimes \tensor{\bar{W}}{^1}\tensor{\tilde{W}}{^1}\tensor{Y}{^1}\otimes Z^2\tensor{Y}{^2_{\sigma_1\sigma_2}}\epsilon_B(1_{B\tau_1\sigma_2}\tensor{\bar{X}}{^2_{\sigma_1}})\\
			&\otimes1_{B\tau_2}1_{B\tau_3}\tensor{\tilde{X}}{^2}\tensor{\tilde{W}}{^2}\otimes1_{B\tau_4}\tensor{X}{^2}\tensor{\bar{W}}{^2}\\
			\stackrel{\bm{(**)}}{=}\sum&(\tensor{\tilde{X}}{^1}(\tensor{Z}{^1_{\tau_1}}\tensor{\bar{X}}{^1})_{\sigma_3})_{\tau_4}\tensor{X}{^1}\otimes \tensor{\bar{W}}{^1}\tensor{\tilde{W}}{^1_{\sigma_3}}\tensor{Y}{^1}\otimes Z^2\tensor{Y}{^2_{\sigma_1\sigma_2}}\epsilon_B(1_{B\tau_1\sigma_2}\tensor{\bar{X}}{^2_{\sigma_1}})\\
			&\otimes\tensor{\tilde{X}}{^2}\tensor{\tilde{W}}{^2}\otimes1_{B\tau_4}\tensor{X}{^2}\tensor{\bar{W}}{^2}\\
			\stackrel{C2)}{=}\sum&\tensor{X}{^1}(\tensor{\tilde{X}}{^1}(\tensor{Z}{^1_{\tau_1}}\tensor{\bar{X}}{^1})_{\sigma_3})_{\sigma_4}\epsilon_B(\tensor{W}{^1_{\sigma_4}})\otimes \tensor{\bar{W}}{^1}\tensor{\tilde{W}}{^1_{\sigma_3}}\tensor{Y}{^1}\otimes Z^2\tensor{Y}{^2_{\sigma_1\sigma_2}}\epsilon_B(1_{B\tau_1\sigma_2}\tensor{\bar{X}}{^2_{\sigma_1}})\\
			&\otimes\tensor{\tilde{X}}{^2}\tensor{\tilde{W}}{^2}\otimes\tensor{X}{^2}\tensor{W}{^2}\tensor{\bar{W}}{^2}\\
			\stackrel{C13)}{=}\sum&\tensor{X}{^1}(\tensor{\tilde{X}}{^1}(\tensor{Z}{^1_{\tau_1}}\tensor{\bar{X}}{^1})_{\sigma_3})_{\sigma_4}\epsilon_B(\tensor{\bar{W}}{^1_{(1)\sigma_4}})\otimes \tensor{\bar{W}}{^1_{(2)}}\tensor{\tilde{W}}{^1_{\sigma_3}}\tensor{Y}{^1}\otimes Z^2\tensor{Y}{^2_{\sigma_1\sigma_2}}\\
			&\cdot\epsilon_B(1_{B\tau_1\sigma_2}\tensor{\bar{X}}{^2_{\sigma_1}})\otimes\tensor{\tilde{X}}{^2}\tensor{\tilde{W}}{^2}\otimes\tensor{X}{^2}\tensor{\bar{W}}{^2}\\
			\stackrel{(B7.1)}{=}\sum&\tensor{X}{^1}(\tensor{\tilde{X}}{^1}(\tensor{Z}{^1_{\tau_1}}\tensor{\bar{X}}{^1})_{\sigma_3})_{\sigma_4}\otimes \tensor{\bar{W}}{^1_{\sigma_4}}\tensor{\tilde{W}}{^1_{\sigma_3}}\tensor{Y}{^1}\otimes Z^2\tensor{Y}{^2_{\sigma_1\sigma_2}}\epsilon_B(1_{B\tau_1\sigma_2}\tensor{\bar{X}}{^2_{\sigma_1}})\\
			&\otimes\tensor{\tilde{X}}{^2}\tensor{\tilde{W}}{^2}\otimes\tensor{X}{^2}\tensor{\bar{W}}{^2}\\
			\stackrel{(B7.1)}{=}\sum&\tensor{X}{^1}(\tensor{\tilde{X}}{^1}(\tensor{Z}{^1_{\tau_1}}\tensor{\bar{X}}{^1})_{\sigma_3})_{\sigma_4}\otimes \tensor{\bar{W}}{^1_{\sigma_4}}\epsilon_B(\tensor{\tilde{W}}{^1_{(1)\sigma_3}})\tensor{\tilde{W}}{^1_{(2)}}\tensor{Y}{^1}\otimes Z^2\tensor{Y}{^2_{\sigma_1\sigma_2}}\\
			&\cdot\epsilon_B(1_{B\tau_1\sigma_2}\tensor{\bar{X}}{^2_{\sigma_1}})\otimes\tensor{\tilde{X}}{^2}\tensor{\tilde{W}}{^2}\otimes\tensor{X}{^2}\tensor{\bar{W}}{^2}\\
			\stackrel{C13)}{=}\sum&\tensor{X}{^1}(\tensor{\tilde{X}}{^1}(\tensor{Z}{^1_{\tau_1}}\tensor{\bar{X}}{^1})_{\sigma_3})_{\sigma_4}\otimes \tensor{\bar{W}}{^1_{\sigma_4}}\epsilon_B(\tensor{W}{^1_{\sigma_3}})\tensor{\tilde{W}}{^1}\tensor{Y}{^1}\otimes Z^2\tensor{Y}{^2_{\sigma_1\sigma_2}}\epsilon_B(1_{B\tau_1\sigma_2}\tensor{\bar{X}}{^2_{\sigma_1}})\\
			&\otimes\tensor{\tilde{X}}{^2}\tensor{W}{^2}\tensor{\tilde{W}}{^2}\otimes\tensor{X}{^2}\tensor{\bar{W}}{^2}\\
			\stackrel{C2)}{=}\sum&\tensor{X}{^1}((\tensor{Z}{^1_{\tau_1}}\tensor{\bar{X}}{^1})_{\tau_2}\tensor{\tilde{X}}{^1})_{\sigma_4}\otimes \tensor{\bar{W}}{^1_{\sigma_4}}\tensor{\tilde{W}}{^1}\tensor{Y}{^1}\otimes Z^2\tensor{Y}{^2_{\sigma_1\sigma_2}}\epsilon_B(1_{B\tau_1\sigma_2}\tensor{\bar{X}}{^2_{\sigma_1}})\\
			&\otimes1_{B\tau_2}\tensor{\tilde{X}}{^2}\tensor{\tilde{W}}{^2}\otimes\tensor{X}{^2}\tensor{\bar{W}}{^2}\\
			\stackrel{(B7.1)}{=}\sum&\tensor{X}{^1}(\tensor{Z}{^1_{\tau_1\tau_2}}\tensor{\bar{X}}{^1_{\tau_3}}\tensor{\tilde{X}}{^1})_{\sigma_4}\otimes \tensor{\bar{W}}{^1_{\sigma_4}}\tensor{\tilde{W}}{^1}\tensor{Y}{^1}\otimes Z^2\tensor{Y}{^2_{\sigma_1\sigma_2}}\epsilon_B(1_{B\tau_1\sigma_2}\tensor{\bar{X}}{^2_{\sigma_1}})\\
			&\otimes1_{B\tau_2}1_{B\tau_3}\tensor{\tilde{X}}{^2}\tensor{\tilde{W}}{^2}\otimes\tensor{X}{^2}\tensor{\bar{W}}{^2}\\
			=(id_A&\otimes id_B\otimes id_A\otimes id_B\otimes\epsilon_A\otimes id_B)\Big(\textit{RS of (9)}\Big).
		\end{align*}
		
		Identity corr. to $id_A\otimes id_B\otimes id_A\otimes id_B\otimes id_A\otimes id_B$: 
		\begin{align*}
			\textit{LS of }&\textit{(9)}\\
			=\sum&\tensor{Z}{^1_{\tau_1\tau_2}}\tensor{\bar{X}}{^1_{\tau_3}}X^1\otimes W^1Y^1\otimes (Z^2\tensor{Y}{^2_{\sigma_1\sigma_2}})_{(1)}\epsilon_B(1_{B\tau_1\sigma_2}\tensor{\bar{X}}{^2_{\sigma_1}})\\
			&\otimes(1_{B\tau_2}1_{B\tau_3}X^2W^2)_{(1){\tau_4}}\otimes(Z^2\tensor{Y}{^2_{\sigma_1\sigma_2}})_{(2){\tau_4}}\otimes(1_{B\tau_2}1_{B\tau_3}X^2W^2)_{(2)}\\
			\stackrel{\bm{(***)}}{=}\sum&\tensor{X}{^1}(\tensor{Z}{^1_{\tau_1\tau_2}}\tensor{\bar{X}}{^1_{\tau_3}}\tensor{\tilde{X}}{^1})_{\sigma_6}\otimes \tensor{\bar{W}}{^1_{\sigma_6}}\tensor{\tilde{W}}{^1}\tensor{Y}{^1}\otimes (Z^2\tensor{Y}{^2_{\sigma_1\sigma_2}})_{(1)}\epsilon_B(1_{B\tau_1\sigma_2}\tensor{\bar{X}}{^2_{\sigma_1}})\\
			&\otimes(1_{B\tau_2}1_{B\tau_3}\tensor{\tilde{X}}{^2}\tensor{\tilde{W}}{^2})_{\tau_4}\otimes(Z^2\tensor{Y}{^2_{\sigma_1\sigma_2}})_{(2){\tau_4}}\otimes\tensor{X}{^2}\tensor{\bar{W}}{^2}\\
			\stackrel{(\bigstar)}{=}\sum&\tensor{X}{^1}(\tensor{Z}{^1_{\tau_1}}\tensor{\bar{X}}{^1}(\tensor{\bar{Z}}{^1_{\tau_4\tau_5}}\tensor{\tilde{X}}{^1_{\tau_6}}\tensor{\hat{X}}{^1})_{\sigma_5})_{\sigma_6}\otimes \tensor{\bar{W}}{^1_{\sigma_6}}\tensor{Y}{^1_{\sigma_5}}\tensor{W}{^1}\tensor{\bar{Y}}{^1}\\
			&\otimes \tensor{\bar{Z}}{^2}\tensor{\bar{Y}}{^2_{\sigma_3\sigma_4}}\epsilon_B(1_{B\tau_4\sigma_4}\tensor{\tilde{X}}{^2_{\sigma_3}})\otimes1_{B\tau_5}1_{B\tau_6}\tensor{\hat{X}}{^2}\tensor{W}{^2}\\
			&\otimes\tensor{Z}{^2}\tensor{Y}{^2_{\sigma_1\sigma_2}}\epsilon_B(1_{B\tau_1\sigma_2}\tensor{\bar{X}}{^2_{\sigma_1}})\otimes\tensor{X}{^2}\tensor{\bar{W}}{^2}\\
			\stackrel{(B7.1)}{=}\sum&\tensor{X}{^1}(\tensor{Z}{^1_{\tau_1}}\tensor{\bar{X}}{^1}(\tensor{\bar{Z}}{^1_{\tau_4\tau_5}}\tensor{\tilde{X}}{^1_{\tau_6}}\tensor{\hat{X}}{^1})_{\sigma_5})_{\sigma_6}\otimes \epsilon_B(\tensor{\bar{W}}{^1_{(1)\sigma_6}})\tensor{\bar{W}}{^1_{(2)}}\tensor{Y}{^1_{\sigma_5}}\tensor{W}{^1}\tensor{\bar{Y}}{^1}\\
			&\otimes \tensor{\bar{Z}}{^2}\tensor{\bar{Y}}{^2_{\sigma_3\sigma_4}}\epsilon_B(1_{B\tau_4\sigma_4}\tensor{\tilde{X}}{^2_{\sigma_3}})\otimes1_{B\tau_5}1_{B\tau_6}\tensor{\hat{X}}{^2}\tensor{W}{^2}\\
			&\otimes\tensor{Z}{^2}\tensor{Y}{^2_{\sigma_1\sigma_2}}\epsilon_B(1_{B\tau_1\sigma_2}\tensor{\bar{X}}{^2_{\sigma_1}})\otimes\tensor{X}{^2}\tensor{\bar{W}}{^2}\\
			\stackrel{C13)}{=}\sum&\tensor{X}{^1}(\tensor{Z}{^1_{\tau_1}}\tensor{\bar{X}}{^1}(\tensor{\bar{Z}}{^1_{\tau_4\tau_5}}\tensor{\tilde{X}}{^1_{\tau_6}}\tensor{\hat{X}}{^1})_{\sigma_5})_{\sigma_6}\otimes \epsilon_B(\tensor{\bar{\bar{W}}}{^1_{\sigma_6}})\tensor{\bar{W}}{^1}\tensor{Y}{^1_{\sigma_5}}\tensor{W}{^1}\tensor{\bar{Y}}{^1}\\
			&\otimes \tensor{\bar{Z}}{^2}\tensor{\bar{Y}}{^2_{\sigma_3\sigma_4}}\epsilon_B(1_{B\tau_4\sigma_4}\tensor{\tilde{X}}{^2_{\sigma_3}})\otimes1_{B\tau_5}1_{B\tau_6}\tensor{\hat{X}}{^2}\tensor{W}{^2}\\
			&\otimes\tensor{Z}{^2}\tensor{Y}{^2_{\sigma_1\sigma_2}}\epsilon_B(1_{B\tau_1\sigma_2}\tensor{\bar{X}}{^2_{\sigma_1}})\otimes\tensor{X}{^2}\tensor{\bar{\bar{W}}}{^2}\tensor{\bar{W}}{^2}\\
			\stackrel{C2)}{=}\sum&(\tensor{Z}{^1_{\tau_1}}\tensor{\bar{X}}{^1}(\tensor{\bar{Z}}{^1_{\tau_4\tau_5}}\tensor{\tilde{X}}{^1_{\tau_6}}\tensor{\hat{X}}{^1})_{\sigma_5})_{\tau_7}\tensor{X}{^1}\otimes \tensor{\bar{W}}{^1}\tensor{Y}{^1_{\sigma_5}}\tensor{W}{^1}\tensor{\bar{Y}}{^1}\\
			&\otimes \tensor{\bar{Z}}{^2}\tensor{\bar{Y}}{^2_{\sigma_3\sigma_4}}\epsilon_B(1_{B\tau_4\sigma_4}\tensor{\tilde{X}}{^2_{\sigma_3}})\otimes1_{B\tau_5}1_{B\tau_6}\tensor{\hat{X}}{^2}\tensor{W}{^2}\\
			&\otimes\tensor{Z}{^2}\tensor{Y}{^2_{\sigma_1\sigma_2}}\epsilon_B(1_{B\tau_1\sigma_2}\tensor{\bar{X}}{^2_{\sigma_1}})\otimes1_{B\tau_7}\tensor{X}{^2}\tensor{\bar{W}}{^2}\\
			\stackrel{(B8.1)}{=}\sum&(\tensor{Z}{^1_{\tau_1}}\tensor{\bar{X}}{^1})_{\tau_7}(\tensor{\bar{Z}}{^1_{\tau_4\tau_5}}\tensor{\tilde{X}}{^1_{\tau_6}}\tensor{\hat{X}}{^1})_{\sigma_5\tau_8}\tensor{X}{^1}\otimes \tensor{\bar{W}}{^1}\tensor{Y}{^1_{\sigma_5}}\tensor{W}{^1}\tensor{\bar{Y}}{^1}\\
			&\otimes \tensor{\bar{Z}}{^2}\tensor{\bar{Y}}{^2_{\sigma_3\sigma_4}}\epsilon_B(1_{B\tau_4\sigma_4}\tensor{\tilde{X}}{^2_{\sigma_3}})\otimes1_{B\tau_5}1_{B\tau_6}\tensor{\hat{X}}{^2}\tensor{W}{^2}\\
			&\otimes\tensor{Z}{^2}\tensor{Y}{^2_{\sigma_1\sigma_2}}\epsilon_B(1_{B\tau_1\sigma_2}\tensor{\bar{X}}{^2_{\sigma_1}})\otimes1_{B\tau_7}1_{B\tau_8}\tensor{X}{^2}\tensor{\bar{W}}{^2}\\
			\stackrel{C2)}{=}\sum&(\tensor{Z}{^1_{\tau_1}}\tensor{\bar{X}}{^1})_{\tau_7}\tensor{X}{^1}(\tensor{\bar{Z}}{^1_{\tau_4\tau_5}}\tensor{\tilde{X}}{^1_{\tau_6}}\tensor{\hat{X}}{^1})_{\sigma_5\sigma_6}\epsilon_B(\tensor{\bar{\bar{W}}}{^1_{\sigma_6}})\otimes \tensor{\bar{W}}{^1}\tensor{Y}{^1_{\sigma_5}}\tensor{W}{^1}\tensor{\bar{Y}}{^1}\\
			&\otimes \tensor{\bar{Z}}{^2}\tensor{\bar{Y}}{^2_{\sigma_3\sigma_4}}\epsilon_B(1_{B\tau_4\sigma_4}\tensor{\tilde{X}}{^2_{\sigma_3}})\otimes1_{B\tau_5}1_{B\tau_6}\tensor{\hat{X}}{^2}\tensor{W}{^2}\\
			&\otimes\tensor{Z}{^2}\tensor{Y}{^2_{\sigma_1\sigma_2}}\epsilon_B(1_{B\tau_1\sigma_2}\tensor{\bar{X}}{^2_{\sigma_1}})\otimes1_{B\tau_7}\tensor{X}{^2}\tensor{\bar{\bar{W}}}{^2}\tensor{\bar{W}}{^2}\\
			\stackrel{C13)}{=}\sum&(\tensor{Z}{^1_{\tau_1}}\tensor{\bar{X}}{^1})_{\tau_7}\tensor{X}{^1}(\tensor{\bar{Z}}{^1_{\tau_4\tau_5}}\tensor{\tilde{X}}{^1_{\tau_6}}\tensor{\hat{X}}{^1})_{\sigma_5\sigma_6}\epsilon_B(\tensor{\bar{W}}{^1_{(1)\sigma_6}})\otimes \tensor{\bar{W}}{^1_{(2)}}\tensor{Y}{^1_{\sigma_5}}\tensor{W}{^1}\tensor{\bar{Y}}{^1}\\
			&\otimes \tensor{\bar{Z}}{^2}\tensor{\bar{Y}}{^2_{\sigma_3\sigma_4}}\epsilon_B(1_{B\tau_4\sigma_4}\tensor{\tilde{X}}{^2_{\sigma_3}})\otimes1_{B\tau_5}1_{B\tau_6}\tensor{\hat{X}}{^2}\tensor{W}{^2}\\
			&\otimes\tensor{Z}{^2}\tensor{Y}{^2_{\sigma_1\sigma_2}}\epsilon_B(1_{B\tau_1\sigma_2}\tensor{\bar{X}}{^2_{\sigma_1}})\otimes1_{B\tau_7}\tensor{X}{^2}\tensor{\bar{W}}{^2}\\
			\stackrel{(B7.1)}{=}\sum&(\tensor{Z}{^1_{\tau_1}}\tensor{\bar{X}}{^1})_{\tau_7}\tensor{X}{^1}(\tensor{\bar{Z}}{^1_{\tau_4\tau_5}}\tensor{\tilde{X}}{^1_{\tau_6}}\tensor{\hat{X}}{^1})_{\sigma_5\sigma_6}\otimes \tensor{\bar{W}}{^1_{\sigma_6}}\tensor{Y}{^1_{\sigma_5}}\tensor{W}{^1}\tensor{\bar{Y}}{^1}\\
			&\otimes \tensor{\bar{Z}}{^2}\tensor{\bar{Y}}{^2_{\sigma_3\sigma_4}}\epsilon_B(1_{B\tau_4\sigma_4}\tensor{\tilde{X}}{^2_{\sigma_3}})\otimes1_{B\tau_5}1_{B\tau_6}\tensor{\hat{X}}{^2}\tensor{W}{^2}\\
			&\otimes\tensor{Z}{^2}\tensor{Y}{^2_{\sigma_1\sigma_2}}\epsilon_B(1_{B\tau_1\sigma_2}\tensor{\bar{X}}{^2_{\sigma_1}})\otimes1_{B\tau_7}\tensor{X}{^2}\tensor{\bar{W}}{^2}\\
			\stackrel{(1.2)}{=}\sum&(\tensor{Z}{^1_{\tau_1}}\tensor{\bar{X}}{^1})_{\tau_7}\tensor{X}{^1}(\tensor{\bar{Z}}{^1_{\tau_4\tau_5}}\tensor{\tilde{X}}{^1_{\tau_6}}\tensor{\hat{X}}{^1})_{\sigma_5}\otimes (\tensor{\bar{W}}{^1}\tensor{Y}{^1})_{\sigma_5}\tensor{W}{^1}\tensor{\bar{Y}}{^1}\\
			&\otimes \tensor{\bar{Z}}{^2}\tensor{\bar{Y}}{^2_{\sigma_3\sigma_4}}\epsilon_B(1_{B\tau_4\sigma_4}\tensor{\tilde{X}}{^2_{\sigma_3}})\otimes1_{B\tau_5}1_{B\tau_6}\tensor{\hat{X}}{^2}\tensor{W}{^2}\\
			&\otimes\tensor{Z}{^2}\tensor{Y}{^2_{\sigma_1\sigma_2}}\epsilon_B(1_{B\tau_1\sigma_2}\tensor{\bar{X}}{^2_{\sigma_1}})\otimes1_{B\tau_7}\tensor{X}{^2}\tensor{\bar{W}}{^2}\\
			\stackrel{(B8.1)}{=}\sum&\tensor{Z}{^1_{\tau_1\tau_2}}\tensor{\bar{X}}{^1_{\tau_3}}\tensor{X}{^1}(\tensor{\bar{Z}}{^1_{\tau_4\tau_5}}\tensor{\tilde{X}}{^1_{\tau_6}}\tensor{\hat{X}}{^1})_{\sigma_5}\otimes (\tensor{\bar{W}}{^1}\tensor{Y}{^1})_{\sigma_5}\tensor{W}{^1}\tensor{\bar{Y}}{^1}\\
			&\otimes \tensor{\bar{Z}}{^2}\tensor{\bar{Y}}{^2_{\sigma_3\sigma_4}}\epsilon_B(1_{B\tau_4\sigma_4}\tensor{\tilde{X}}{^2_{\sigma_3}})\otimes1_{B\tau_5}1_{B\tau_6}\tensor{\hat{X}}{^2}\tensor{W}{^2}\\
			&\otimes\tensor{Z}{^2}\tensor{Y}{^2_{\sigma_1\sigma_2}}\epsilon_B(1_{B\tau_1\sigma_2}\tensor{\bar{X}}{^2_{\sigma_1}})\otimes1_{B\tau_2}1_{B\tau_3}\tensor{X}{^2}\tensor{\bar{W}}{^2}\\
			=\textit{RS of}&\textit{ (9)}.
		\end{align*}
		In above $(\bigstar)$ represents the identity corresponding to $id_A\otimes id_B\otimes id_A\otimes id_B\otimes id_A\otimes\epsilon_B$ which is derived in another line. That is
		\begin{align*}
			\sum&\tensor{Z}{^1_{\tau_1\tau_2}}\tensor{\bar{X}}{^1_{\tau_3}}\tensor{X}{^1}\otimes\tensor{W}{^1}\tensor{Y}{^1}\otimes(\tensor{Z}{^2}\tensor{Y}{^2_{\sigma}})_{(1)}\epsilon_B((1_{B\tau_1}\tensor{\bar{X}}{^2})_{\sigma})\otimes(1_{B\tau_2}1_{B\tau_3}\tensor{X}{^2}\tensor{W}{^2})_{\tau_4}\\
			&\otimes(\tensor{Z}{^2}\tensor{Y}{^2_{\sigma}})_{(2)\tau_4}\\
			=\sum&\tensor{Z}{^1_{\tau_1}}\tensor{X}{^1}(\tensor{\bar{Z}}{^1_{\tau_3\tau_4}}\tensor{\bar{X}}{^1_{\tau_2}}\tensor{\tilde{\tilde{X}}}{^1})_{\sigma_4}\otimes\tensor{Y}{^1_{\sigma_4}}\tensor{W}{^1}\tensor{\bar{Y}}{^1}\otimes\tensor{\bar{Z}}{^2}\tensor{\bar{Y}}{^2_{\sigma_1}}\epsilon_B((1_{B\tau_3}\tensor{\bar{X}}{^2})_{\sigma_1})\\
			&\otimes1_{B\tau_4}1_{B\tau_2}\tensor{\tilde{\tilde{X}}}{^2}\tensor{W}{^2}\otimes\tensor{Z}{^2}\tensor{Y}{^2_{\sigma_6}}\epsilon_B((1_{B\tau_1}\tensor{X}{^2})_{\sigma_6}).
		\end{align*}
	\end{proof}
	
	Combining Proposition \ref{thm:3.4}, Proposition \ref{thm:3.5} and Theorem \ref{thm:3.6}, we conclude that:
	
	\begin{thm}\label{thm:3.7}
	Let $A{_\tau\times_\sigma}B$ be a smash biproduct bialgebra. Under condition that $\sigma$ is right conormal, $A{_\tau\times_\sigma}B$ admits a quasitriangular structure if and only if there exist normalized elements $W=\sum W^1\otimes W^2\in B\otimes B$, $X=\sum X^1\otimes X^2\in A\otimes B$, $Y=\sum Y^1\otimes Y^2\in B\otimes A$ and $Z=\sum Z^1\otimes Z^2\in A\otimes A$ satisfying $C1)-C19)$ above. In this case, the quasitriangular structure is given as $\alpha=\sum \tensor{Z}{^1_{\tau_1\tau_2}}\tensor{\bar{X}}{^1_{\tau_3}}X^1\otimes W^1Y^1\otimes Z^2\tensor{Y}{^2_{\sigma_1\sigma_2}}\epsilon_B(1_{B\tau_1\sigma_2}\tensor{\bar{X}}{^2_{\sigma_1}})\otimes1_{B\tau_2}1_{B\tau_3}X^2W^2$.
	\end{thm}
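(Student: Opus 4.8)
The plan is to assemble Theorem \ref{thm:3.7} from the three results already established above, namely Proposition \ref{thm:3.4}, Proposition \ref{thm:3.5} and Theorem \ref{thm:3.6}. Between them these contain all of the analytic content, so the equivalence theorem itself reduces to checking that the factor elements read off from a given quasitriangular structure are exactly the data that the reconstruction formula consumes. In particular, no new lengthy computation should be needed.

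First I would treat the ``only if'' direction. Suppose $A{_\tau\times_\sigma}B$ carries a quasitriangular structure $\mathcal{R}=\sum\mathcal{R}^1\otimes\mathcal{R}^2\otimes\mathcal{R}^3\otimes\mathcal{R}^4$. I define the factor elements $W=(\pi_B\otimes\pi_B)(\mathcal{R})$, $X=(\pi_A\otimes\pi_B)(\mathcal{R})$, $Y=(\pi_B\otimes\pi_A)(\mathcal{R})$ and $Z=(\pi_A\otimes\pi_A)(\mathcal{R})$ as in the opening of Section \ref{Sec:3}. By Proposition \ref{thm:3.1} these four elements are normalized, i.e.\ they satisfy $(N1)$--$(N4)$. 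Since $\sigma$ is right conormal, Proposition \ref{thm:3.4} applies and shows that $\mathcal{R}$ coincides with the reconstruction expression $\alpha$ built from precisely these $W,X,Y,Z$. Consequently $\alpha=\mathcal{R}$ is itself a quasitriangular structure, so the hypotheses of Proposition \ref{thm:3.5} are met, and we obtain the identities $C1)$--$C19)$ for this choice of $W,X,Y,Z$.

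For the ``if'' direction I would argue in the reverse order. Given normalized elements $W,X,Y,Z$ satisfying $C1)$--$C19)$, Theorem \ref{thm:3.6} asserts directly that the displayed expression $\alpha$ is a quasitriangular structure of $A{_\tau\times_\sigma}B$; hence $A{_\tau\times_\sigma}B$ is quasitriangular, and the exhibited structure is exactly $\alpha$. Combining the two directions yields the stated equivalence, with the reconstruction formula for the quasitriangular structure being the one displayed in the statement.

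The step demanding the most care is the bookkeeping that ties the two halves together: one must ensure that the $W,X,Y,Z$ produced by projecting $\mathcal{R}$ in the forward direction are identical to the $W,X,Y,Z$ fed into $\alpha$, so that Proposition \ref{thm:3.4} and Proposition \ref{thm:3.5} refer to the same data. This is precisely where right conormality of $\sigma$ is indispensable, since it is the hypothesis under which Proposition \ref{thm:3.4} produces the factorized form of $\mathcal{R}$ (and under which the auxiliary identities $(B4.1)$--$(B8.1)$ exploited in the reconstruction hold). Apart from invoking these three results in the correct order, the argument is formal, and the genuine substance of the proof resides entirely in them.
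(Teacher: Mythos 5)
Your proposal is correct and matches the paper exactly: the paper derives Theorem \ref{thm:3.7} precisely by combining Proposition \ref{thm:3.4}, Proposition \ref{thm:3.5} and Theorem \ref{thm:3.6}, with the forward direction using the factor elements (normalized by Proposition \ref{thm:3.1}) and the converse given directly by Theorem \ref{thm:3.6}. Your unpacking of the bookkeeping is a faithful, slightly more explicit version of the paper's one-line argument.
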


	\section{Applications}\label{Sec:4}
	In this section, we show that by adding extra normal or conormal conditions, we can get various simplified factorization theories. These results fit in with different constructions of Hopf algebras.

	\subsection{When $\tau$ is right normal}\label{Sec:4.1}
	Let $A{_\tau\times_\sigma}B$ be a smash biproduct bialgebra for which $\sigma$ is right conormal and additionally $\tau$ is right normal. Then $i_B:B\rightarrow A{_\tau\times_\sigma}B$ $(b\mapsto1_A\otimes b)$ and $\pi_B:A{_\tau\times_\sigma}B\rightarrow B\quad(\sum_i a_i\otimes b_i\mapsto\sum_i\epsilon_A(a_i)b_i)$ are bialgebra maps. 
	
	Using the extra condition that $\tau$ is right normal: $\sum b_\tau\otimes 1_{A\tau}=b\otimes 1_A$, we have that $\sum b_{\tau}\otimes a_{\tau}=1_{B\tau}b\otimes a_{\tau}$ for $(B5)$. Now the formula in Proposition \ref{thm:3.4} becomes:
	\begin{align*}
		&\sum\tensor{Z}{^1_{\tau_1\tau_2}}\tensor{\bar{X}}{^1_{\tau_3}}X^1\otimes W^1Y^1\otimes Z^2\epsilon_B(1_{B\tau_1\sigma_2}\tensor{\bar{X}}{^2_{\sigma_1}})\tensor{Y}{^2_{\sigma_1\sigma_2}}\otimes1_{B\tau_2}1_{B\tau_3}X^2W^2\hspace{10em}\\
		=&\sum\tensor{Z}{^1_{\tau_1\tau_2}}\tensor{\bar{X}}{^1_{\tau_3}}X^1\otimes W^1Y^1\otimes Z^2\epsilon_B((1_{B\tau_1}\tensor{\bar{X}}{^2})_{\sigma})\tensor{Y}{^2_{\sigma}}\otimes1_{B\tau_2}1_{B\tau_3}X^2W^2\\
		=&\sum\tensor{Z}{^1_{\tau_1\tau_2}}\tensor{\bar{X}}{^1_{\tau_3}}X^1\otimes W^1Y^1\otimes Z^2\epsilon_B(\tensor{\bar{X}}{^2_{\tau_1\sigma}})\tensor{Y}{^2_{\sigma}}\otimes1_{B\tau_2}1_{B\tau_3}X^2W^2\\
		=&\sum\tensor{Z}{^1_{\tau_1\tau_2}}\tensor{\bar{X}}{^1_{\tau_3}}X^1\otimes W^1Y^1\otimes Z^2\epsilon_B(\tensor{\bar{X}}{^2_{\tau_1\sigma}})\tensor{Y}{^2_{\sigma}}\otimes\tensor{X}{^2_{\tau_3\tau_2}}W^2.
	\end{align*}
	
	Also the identities in Proposition \ref{thm:3.5} becomes:
	\begin{align*}
		C1')\quad\sum&{\tensor{Z}{^1_{\tau_1\tau_2}}}{\tensor{\bar{X}}{^1_{\tau_3}}} X^1a_{(1)\sigma_2}\epsilon_B( (W^1Y^1)_{\sigma_2})\otimes Z^2{\tensor{Y}{^2_{\sigma_1}}}\epsilon_B(\tensor{\bar{X}}{^2_{\tau_1\sigma_1}})a_{(2){\sigma_3}}\\
		&\cdot\epsilon_B(({\tensor{X}{^2_{\tau_3\tau_2}}}W^2)_{\sigma_3})=\sum a_{(2)\tau}Z^1\otimes a_{(1)}\tensor{Z}{^2_\sigma}\epsilon_B( 1_{B\tau\sigma});\\
		C2')\quad\sum&X^1a_\sigma \epsilon_B(\tensor{W}{^1_\sigma})\otimes X^2W^2=\sum a_\tau X^1\otimes\tensor{X}{^2_\tau};\\
		C3')\quad\sum&W^1Y^11_{B\tau}\otimes Y^2a_{\tau\sigma}\epsilon_B(\tensor{W}{^2_\sigma})=\sum Y^1\otimes aY^2;\\
		C4')\quad\sum&Z^1\otimes Z^2\epsilon_B(b)=\sum\tensor{Z}{^1_{\sigma_1}}\otimes\tensor{Z}{^2_{\sigma_2}}\epsilon_B(b_{\sigma_2\sigma_1});\\
		C5')\quad\sum&X^1\otimes X^2b=\sum\tensor{X}{^1_\sigma}\epsilon_B(b_{(2)\sigma})\otimes b_{(1)}X^2;\\
		C6')\quad\sum&Y^1b\otimes Y^2=\sum b_\sigma Y^1\otimes\tensor{Y}{^2_\sigma};\\
		C7')\quad\sum&W^1b_{(1)}\otimes W^2b_{(2)}=\sum b_{(2)}W^1\otimes b_{(1)}W^2;\\
		C8')\quad\sum&\tensor{Z}{^1_{(1)}}\otimes \tensor{Z}{^1_{(2)}}\otimes Z^2=\sum \tensor{Z}{^1_\tau}X^1\otimes\bar{Z}^1\otimes Z^2\tensor{\bar{Z}}{^2_\sigma}\epsilon_B(\tensor{X}{^2_{\tau\sigma}});\\
		C9')\quad\sum&\tensor{X}{^1_{(1)}}\otimes \tensor{X}{^1_{(2)}}\otimes X^2=\sum X^1\otimes{\bar{X}}^1\otimes X^2{\bar{X}}^2;\\
		C10')\quad\sum&\tensor{Y}{^1_{\tau_1}}\otimes (\tensor{Z}{^1_{\tau_2}}X^1)_{\tau_1}\otimes Z^2\tensor{Y}{^2_\sigma}\epsilon_B(\tensor{X}{^2_{\tau_2\sigma}})=\sum W^1Y^1\otimes Z^1\\
		&\otimes Y^2\tensor{Z}{^2_\sigma}\epsilon_B(\tensor{W}{^2_\sigma});\\
		C11')\quad\sum&\tensor{W}{^1_\tau}\otimes \tensor{X}{^1_\tau}\otimes X^2W^2=\sum W^1\otimes X^1\otimes W^2X^2;\\
		C12')\quad\sum&\tensor{Y}{^1_{(1)}}\otimes \tensor{Y}{^1_{(2)}}\otimes Y^2=\sum W^1Y^1\otimes {\bar{Y}}^1\otimes Y^2\tensor{\bar{Y}}{^2_\sigma}\epsilon_B(\tensor{W}{^2_\sigma});\\
		C13')\quad\sum&\tensor{W}{^1_{(1)}}\otimes \tensor{W}{^1_{(2)}}\otimes W^2=\sum W^1\otimes{\bar{W}}^1\otimes W^2{\bar{W}}^2;\\
		C14')\quad\sum&Z^1\otimes\tensor{Z}{^2_{(1)}}\otimes \tensor{Z}{^2_{(2)}}=\sum \tensor{Z}{^1_\tau}X^1\tensor{\bar{Z}}{^1_{\sigma_1}}\epsilon_B(\tensor{Y}{^1_{\sigma_1}})\otimes{\bar{Z}}^2\\
		&\otimes Z^2\tensor{Y}{^2_{\sigma_2}}\epsilon_B(\tensor{X}{^2_{\tau\sigma_2}});\\
		C15')\quad\sum&\tensor{Z}{^1_{\tau_1}} X^1\otimes \tensor{X}{^2_{\tau_1\tau_2}}\otimes\tensor{Z}{^2_{\tau_2}}=\sum \tensor{Z}{^1_\tau}X^1\tensor{\bar{X}}{^1_{\sigma_1}}\epsilon_B(\tensor{Y}{^1_{\sigma_1}})\otimes {\bar{X}}^2\\
		&\otimes Z^2\tensor{Y}{^2_{\sigma_2}}\epsilon_B(\tensor{X}{^2_{\tau\sigma_2}});\\
		C16')\quad\sum&X^1\otimes\tensor{X}{^2_{(1)}}\otimes \tensor{X}{^2_{(2)}}=\sum X^1 \tensor{\bar{X}}{^1_\sigma}\epsilon_B(\tensor{W}{^1_\sigma})\otimes{\bar{X}}^2\otimes X^2W^2;\\
		C17')\quad\sum&Y^1\otimes\tensor{Y}{^2_{(1)}}\otimes \tensor{Y}{^2_{(2)}}=\sum Y^1{\bar{Y}}^1\otimes {\bar{Y}}^2\otimes Y^2;\\
		C18')\quad\sum&W^1Y^1\otimes\tensor{W}{^2_\tau}\otimes \tensor{Y}{^2_\tau}=\sum Y^1W^1 \otimes W^2\otimes Y^2;\\
		C19')\quad\sum&W^1\otimes\tensor{W}{^2_{(1)}}\otimes \tensor{W}{^2_{(2)}}=\sum W^1{\bar{W}}^1 \otimes {\bar{W}}^2\otimes W^2.
	\end{align*}
	
	Thus from Theorem \ref{thm:3.7}, we have
	\begin{coro}\label{thm:4.1}
		Let $A{_\tau\times_\sigma}B$ be a smash biproduct bialgebra. Under conditions that $\sigma$ is right conormal and $\tau$ is right normal, $A{_\tau\times_\sigma}B$ admits a quasitriangular structure if and only if there exist normalized elements $W=\sum W^1\otimes W^2\in B\otimes B$, $X=\sum X^1\otimes X^2\in A\otimes B$, $Y=\sum Y^1\otimes Y^2\in B\otimes A$ and $Z=\sum Z^1\otimes Z^2\in A\otimes A$ satisfying  $C1')-C19')$ above. In this case, the quasitriangular structure is given as $\alpha=\sum{\tensor{Z}{^1_{\tau_1\tau_2}}}{\tensor{\bar{X}}{^1_{\tau_3}}} X^1\otimes W^1Y^1\otimes Z^2{\tensor{Y}{^2_\sigma}}\epsilon_B(\tensor{\bar{X}}{^2_{\tau_1\sigma}})\otimes{\tensor{X}{^2_{\tau_3\tau_2}}}W^2$.
	\end{coro}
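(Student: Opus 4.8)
The plan is to deduce Corollary \ref{thm:4.1} directly from the master equivalence Theorem \ref{thm:3.7}, specialising to the extra hypothesis that $\tau$ is right normal; no fresh verification of the quasitriangularity axioms is needed. The one structural fact I would isolate first is the simplified form of $(B5)$: since $\tau$ right normal means $\sum b_\tau\otimes 1_{A\tau}=b\otimes 1_A$, applying this to collapse the copy $\tau_2$ in the right-hand side $\sum 1_{B\tau_1}b_{\tau_2}\otimes a_{\tau_1}1_{A\tau_2}$ of $(B5)$ yields
\begin{align*}
\sum b_\tau\otimes a_\tau=\sum 1_{B\tau}b\otimes a_\tau.
\end{align*}
Call this identity $(\dagger)$. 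Being an equality it is reversible, and it is the single workhorse behind every simplification in the corollary: it says that a spurious unit $1_{B\tau}$ placed in front of an element crossed by $\tau$ may be absorbed into that element, and conversely.

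Next I would simplify the reconstruction formula of Proposition \ref{thm:3.4} to the form asserted. Commuting the scalar $\epsilon_B$-factor to the front of $Y^2$ and merging the two $\sigma$-copies by $(1.2)$ rewrites the third leg as $Z^2\epsilon_B((1_{B\tau_1}\bar X^2)_\sigma)Y^2_\sigma$; one application of $(\dagger)$ with $a=Z^1$, $b=\bar X^2$ (paired with $Z^1_{\tau_1}$ in the first leg) absorbs $1_{B\tau_1}$ into $\bar X^2$, giving $\epsilon_B(\bar X^2_{\tau_1\sigma})$. Two further applications of $(\dagger)$ on the fourth leg, first with $b=X^2$ (paired with $\bar X^1_{\tau_3}$) and then with $b=X^2_{\tau_3}$ (paired with $Z^1_{\tau_2}$), convert $1_{B\tau_2}1_{B\tau_3}X^2$ into $X^2_{\tau_3\tau_2}$. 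This produces exactly $\alpha=\sum Z^1_{\tau_1\tau_2}\bar X^1_{\tau_3}X^1\otimes W^1Y^1\otimes Z^2 Y^2_\sigma\epsilon_B(\bar X^2_{\tau_1\sigma})\otimes X^2_{\tau_3\tau_2}W^2$, the displayed quasitriangular structure.

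Finally I would check that, under right normality of $\tau$, the system $C1)-C19)$ is equivalent to $C1')-C19')$. The conditions $C7),C9),C11),C13),C17),C18),C19)$ are literally unchanged. In each remaining condition the only discrepancy between $Ci)$ and $Ci')$ is a single removable unit: either a factor $1_{A\tau}$ arising from a crossing $\tau(1_A\otimes b)$, deleted at once by right normality, or a factor $1_{B\tau}$ standing in front of an element crossed by $\sigma$ and paired with a $\tau$-crossed element, deleted by $(\dagger)$ exactly as above; the case $C6)$ additionally uses the smash-product unit identity $\sum 1_{A\sigma}\otimes b_\sigma=1_A\otimes b$ together with the normalization $(N1)$. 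Since all these steps are reversible, $C1)-C19)\Leftrightarrow C1')-C19')$, and the corollary then follows from Theorem \ref{thm:3.7}. The work is essentially bookkeeping: the only place demanding care is locating, in each identity, which unit is the removable one and applying $(\dagger)$ in the correct tensor slot, so the main (and purely administrative) obstacle is keeping the numerous $\tau$- and $\sigma$-copies correctly indexed, not any conceptual difficulty.
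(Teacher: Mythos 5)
Your proposal is correct and follows exactly the paper's route: the paper likewise derives $\sum b_\tau\otimes a_\tau=\sum 1_{B\tau}b\otimes a_\tau$ from $(B5)$ and right normality of $\tau$, uses it (together with $(1.2)$) to collapse the reconstruction formula of Proposition \ref{thm:3.4} to the stated $\alpha$, rewrites $C1)-C19)$ as $C1')-C19')$ by the same reversible unit-absorptions, and then invokes Theorem \ref{thm:3.7}. The only quibble is cosmetic: a few conditions (e.g.\ $C1)$) require removing more than one unit plus an application of $(1.2)$, and $C3)$, $C12)$ are also literally unchanged, but this does not affect the argument.
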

	\begin{proof}
		Using $\sum b_\tau\otimes 1_{A\tau}=b\otimes 1_A$ in Theorem \ref{thm:3.7}.
	\end{proof}
	
	When the smash biproduct bialgebra $A{_\tau\times_\sigma}B$ is moreover a Hopf algebra, this corollary is still true and it's the accurate form of \cite[Theorem 3.13]{M&W} because the latter omitted the right normal and right conormal conditions in the expression of the theorem. Actually if we remove the right normal and right conormal conditions, the existence of such $W\in B\otimes B$, $X\in A\otimes B$, $Y\in B\otimes A$ and $Z\in A\otimes A$ is not necessary to make $A{_\tau\times_\sigma}B$ quasitriangular. Thus Corollary \ref{thm:4.1} above corrects \cite[Theorem 3.13]{M&W} and generalizes it to smash biproduct bialgebras.
	
	Moreover we could show that the theories in \cite{Z&Z} and \cite{M&W} are equivalent. Suppose that $A{_\tau\times_\sigma}B$ is a smash biproduct Hopf algebra for which $\sigma$ is right conormal and $\tau$ is right normal. Now define
	\begin{align*}
		\triangleright:B\otimes A\rightarrow A\quad&(b\otimes a\mapsto b\triangleright a=\sum a_{\sigma}\epsilon_B(b_{\sigma})),\\
		\rho:A\rightarrow B\otimes A\quad&(a\mapsto\sum a_{[-1]}\otimes a_{[0]}=\sum1_{B\tau}\otimes a_{\tau}).
	\end{align*}
	
	Then $A$ is a left $B$-module and left $B$-comodule. Moreover we have that $A$ is a bialgebra in the category of Yetter-Drinfeld modules $_B^B{\mathcal{YD}}$ and the Radford's biproduct $A\times B$ coincides with $A{_\tau\times_\sigma}B$.
	
	On the other hand, suppose that $B$ is a Hopf algebra and $A\in{_B^B{\mathcal{YD}}}$ a braided Hopf algebra. By setting
	\begin{align*}
		&\sigma:B\otimes A\rightarrow A\otimes B\quad (b\otimes a\mapsto\sum b_{(1)}\triangleright a\otimes b_{(2)}),\\
		&\tau:A\otimes B\rightarrow B\otimes A\quad(a\otimes b\mapsto\sum a_{[-1]}b\otimes a_{[0]}),
	\end{align*} 
	the Radford's biproduct Hopf algebra $A\times B$ is a smash biproduct Hopf algebra $A{_\tau\times_\sigma}B$ for which $\sigma$ is right conormal and $\tau$ is right normal. Hence these smash biproduct Hopf algebras satisfying both the right normal and right conormal conditions are exactly Radford's biproduct Hopf algebras. In particular, we have
	\begin{align*}
		&\sum{\tensor{Z}{^1_{\tau_1\tau_2}}}{\tensor{\bar{X}}{^1_{\tau_3}}} X^1\otimes W^1Y^1\otimes Z^2{\tensor{Y}{^2_\sigma}}\epsilon_B(\tensor{\bar{X}}{^2_{\tau_1\sigma}})\otimes{\tensor{X}{^2_{\tau_3\tau_2}}}W^2\\
		=&\sum{\tensor{Z}{^1_{\tau_1\tau_2}}}{\tensor{\bar{X}}{^1_{\tau_3}}} X^1\otimes W^1Y^1\otimes Z^2{\tensor{Y}{^2_\sigma}}\epsilon_B((1_{B\tau_1}\tensor{\bar{X}}{^2})_{\sigma})\otimes{1_{B\tau_2}1_{B\tau_3}\tensor{X}{^2}}W^2\\
		=&\sum(\tensor{Z}{^1_{[0]}})_{[0]}\tensor{\bar{X}}{^1_{[0]}}X^1\otimes W^1Y^1\otimes Z^2((\tensor{Z}{^1_{[-1]}}\tensor{\bar{X}}{^2})\triangleright\tensor{Y}{^2})\otimes(\tensor{Z}{^1_{[0]}})_{[-1]}\tensor{\bar{X}}{^1_{[-1]}}X^2W^2\\
		=&\sum\tensor{Z}{^1_{[0]}}\tensor{\bar{X}}{^1_{[0]}} X^1\otimes W^1Y^1\otimes Z^2((\tensor{Z}{^1_{[-2]}}\tensor{\bar{X}}{^2})\triangleright\tensor{Y}{^2})\otimes\tensor{Z}{^1_{[-1]}}\tensor{\bar{X}}{^1_{[-1]}}X^2W^2
	\end{align*}
	which is the form of the reconstruction formula in \cite{Z&Z}.
	
	We specially note that the identities above are classified into several groups in \cite{M&W}. $C2')$, $C3')$, $C5')$, $C6')$, $C4')$, $C11')$, $C18')$, $C15')$ and $C10')$ here are respectively the conditions $C1)-C9)$ of \cite[Proposition 3.4]{M&W}, and $C9')$, $C16')$ the compatible triple conditions for $X$ (cf. \cite[Definition 3.5]{M&W}), $C12')$, $C17')$ the compatible triple conditions for $Y$ (cf. \cite[Definition 3.7]{M&W}), $C8')$, $C14')$, $C1')$ the weak quasitriangular structure conditions for $Z$ (cf. \cite[Definition 3.8]{M&W}) and $C13')$, $C19')$, $C7')$ the quasitriangular structure conditions for $W$.

	\subsection{When $\tau$ is left normal}\label{Sec:4.2}
	Let $A{_\tau\times_\sigma}B$ be a smash biproduct bialgebra for which $\sigma$ is right conormal and additionally $\tau$ is left normal. 
	
	Since $\tau$ is left normal: $\sum1_{B\tau}\otimes a_\tau=1_B\otimes a$, the formula in Proposition \ref{thm:3.4} now becomes:
	\begin{align*}
		&\sum \tensor{Z}{^1_{\tau_1\tau_2}}\tensor{\bar{X}}{^1_{\tau_3}}X^1\otimes W^1Y^1\otimes Z^2\epsilon_B(1_{B\tau_1\sigma_2}\tensor{\bar{X}}{^2_{\sigma_1}})\tensor{Y}{^2_{\sigma_1\sigma_2}}\otimes1_{B\tau_2}1_{B\tau_3}X^2W^2\\
		=&\sum \tensor{Z}{^1}\tensor{\bar{X}}{^1}X^1\otimes W^1Y^1\otimes Z^2\epsilon_B(\tensor{\bar{X}}{^2_\sigma})\tensor{Y}{^2_\sigma}\otimes X^2W^2.
	\end{align*}
	
	And the identities in Proposition \ref{thm:3.5} become:
	\begin{align*}
		C1'')\quad\sum& Z^1{\bar{X}}^1X^1a_{(1)\sigma_2}\epsilon_B((W^1Y^1)_{\sigma_2})\otimes Z^2\tensor{Y}{^2_{\sigma_1}}\epsilon_B(\tensor{\bar{X}}{^2_{\sigma_1}})a_{(2)\sigma_3}\epsilon_B((X^2W^2)_{\sigma_3})\\
		=\sum&a_{(2)}Z^1\otimes a_{(1)}Z^2;\\
		C2'')\quad\sum&X^1a_{\sigma}\epsilon_B(\tensor{W}{^1_{\sigma}})\otimes X^2W^2=\sum aX^1\otimes X^2;\\
		C3'')\quad\sum&W^1Y^1\otimes Y^2a_\sigma\epsilon_B(\tensor{W}{^2_\sigma})=\sum Y^1\otimes aY^2;\\
		C4'')\quad\sum&Z^1\otimes Z^2\epsilon_B(b)=\sum1_{A\tau}\tensor{Z}{^1_{\sigma_1}}\epsilon_B( b_{(2)\sigma_1})\otimes \tensor{Z}{^2_{\sigma_2}}\epsilon_B(b_{(1)\tau\sigma_2});\\
		C5'')\quad\sum&X^1\otimes X^2b=\sum1_{A\tau}\tensor{X}{^1_\sigma}\epsilon_B( b_{(2)\sigma})\otimes b_{(1)\tau}X^2;\\
		C6'')\quad\sum&W^1Y^1b_\tau\otimes Y^21_{A\tau\sigma}\epsilon_B(\tensor{W}{^2_\sigma})=\sum b_{\sigma}Y^1\otimes\tensor{Y}{^2_\sigma};\\
		C7'')\quad\sum&W^1b_{(1)}\otimes W^2b_{(2)}=\sum b_{(2)}W^1\otimes b_{(1)}W^2;\\
		C8'')\quad\sum&\tensor{Z}{^1_{(1)}}\otimes\tensor{Z}{^1_{(2)}}\otimes Z^2=\sum Z^1X^1\otimes {\bar{Z}}^1\otimes Z^2\tensor{\bar{Z}}{^2_\sigma}\epsilon_B(\tensor{X}{^2_\sigma});\\
		C9'')\quad\sum&\tensor{X}{^1_{(1)}}\otimes\tensor{X}{^1_{(2)}}\otimes X^2=\sum X^1\otimes {\bar{X}}^1\otimes X^2{\bar{X}}^2;\\
		C10'')\quad\sum&\tensor{Y}{^1_\tau}\otimes(Z^1X^1)_\tau\otimes Z^2\tensor{Y}{^2_\sigma}\epsilon_B(\tensor{X}{^2_\sigma})=\sum W^1Y^1\otimes Z^1\otimes Y^2\tensor{Z}{^2_\sigma}\epsilon_B(\tensor{W}{^2_\sigma})\\
		C11'')\quad\sum&\tensor{W}{^1_\tau}\otimes\tensor{X}{^1_\tau}\otimes X^2W^2=\sum W^1\otimes X^1\otimes W^2X^2;\\
		C12'')\quad\sum&\tensor{Y}{^1_{(1)}}\otimes\tensor{Y}{^1_{(2)}}\otimes Y^2=\sum W^1Y^1\otimes {\bar{Y}}^1\otimes Y^2\tensor{\bar{Y}}{^2_\sigma}\epsilon_B(\tensor{W}{^2_\sigma});\\
		C13'')\quad\sum&\tensor{W}{^1_{(1)}}\otimes \tensor{W}{^1_{(2)}}\otimes W^2=\sum W^1\otimes{\bar{W}}^1\otimes W^2{\bar{W}}^2;\\
		C14'')\quad\sum&Z^1\otimes\tensor{Z}{^2_{(1)}}\otimes \tensor{Z}{^2_{(2)}}=\sum \tensor{Z}{^1}X^1\tensor{\bar{Z}}{^1_{\sigma_1}}\epsilon_B(\tensor{Y}{^1_{\sigma_1}})\otimes{\bar{Z}}^2\otimes Z^2\tensor{Y}{^2_{\sigma_2}}\epsilon_B(\tensor{X}{^2_{\sigma_2}});\\
		C15'')\quad\sum&\tensor{Z}{^1}X^1\otimes \tensor{X}{^2_\tau}\otimes\tensor{Z}{^2_\tau}=\sum \tensor{Z}{^1}X^1\tensor{\bar{X}}{^1_{\sigma_1}}\epsilon_B(\tensor{Y}{^1_{\sigma_1}})\otimes {\bar{X}}^2\otimes Z^2\tensor{Y}{^2_{\sigma_2}}\epsilon_B(\tensor{X}{^2_{\sigma_2}});\\
		C16'')\quad\sum&X^1\otimes\tensor{X}{^2_{(1)}}\otimes \tensor{X}{^2_{(2)}}=\sum X^1 \tensor{\bar{X}}{^1}\otimes X^2\otimes{\bar{X}}^2;\\
		C17'')\quad\sum&Y^1\otimes\tensor{Y}{^2_{(1)}}\otimes \tensor{Y}{^2_{(2)}}=\sum Y^1{\bar{Y}}^1\otimes {\bar{Y}}^2\otimes Y^2;\\
		C18'')\quad\sum&W^1Y^1\otimes\tensor{W}{^2_\tau}\otimes \tensor{Y}{^2_\tau}=\sum Y^1W^1 \otimes W^2\otimes Y^2;\\
		C19'')\quad\sum&W^1\otimes\tensor{W}{^2_{(1)}}\otimes \tensor{W}{^2_{(2)}}=\sum W^1{\bar{W}}^1 \otimes {\bar{W}}^2\otimes W^2.
	\end{align*}
	
	Thus from Theorem \ref{thm:3.7} we have:
	\begin{coro}\label{thm:4.2}
		Let $A{_\tau\times_\sigma}B$ be a smash biproduct bialgebra. Under conditions that $\sigma$ is right conormal and $\tau$ is left normal, $A{_\tau\times_\sigma}B$ admits a quasitriangular structure if and only if there exist normalized elements $W=\sum W^1\otimes W^2\in B\otimes B$, $X=\sum X^1\otimes X^2\in A\otimes B$, $Y=\sum Y^1\otimes Y^2\in B\otimes A$ and $Z=\sum Z^1\otimes Z^2\in A\otimes A$ satisfying $C1'')-C19'')$ above. In this case, the quasitriangular structure is given as $\alpha=\sum Z^1{\bar{X}}^1X^1\otimes W^1Y^1\otimes Z^2\tensor{Y}{^2_\sigma}\epsilon_B(\tensor{\bar{X}}{^2_{\sigma}})\otimes X^2W^2$. 
	\end{coro}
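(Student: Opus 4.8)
The plan is to read the corollary straight off Theorem~\ref{thm:3.7} by imposing the single extra hypothesis, exactly as Corollary~\ref{thm:4.1} was obtained from the right-normal identity. Theorem~\ref{thm:3.7} already supplies the full right-conormal equivalence, so all that is required is to feed the left normal identity $\sum 1_{B\tau}\otimes a_\tau=1_B\otimes a$ into both the reconstruction formula and the list $C1)$--$C19)$, and to check that the data collapses to the double-primed data.

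First I would simplify the reconstruction formula. Every $\tau$-copy that, in the general $\alpha$, twists a first-slot factor $Z^1$ or $\bar X^1$ is paired, through the smash-coproduct bookkeeping, with a unit $1_B$ sitting in one of the two $B$-slots; applying $\sum 1_{B\tau}\otimes a_\tau=1_B\otimes a$ strips that twist and turns each $1_{B\tau_i}$ into $1_B$. This is precisely the short computation already displayed at the head of Section~\ref{Sec:4.2}, which reduces $\alpha$ to $\sum Z^1{\bar{X}}^1X^1\otimes W^1Y^1\otimes Z^2\tensor{Y}{^2_{\sigma}}\epsilon_B(\tensor{\bar{X}}{^2_{\sigma}})\otimes X^2W^2$; so this step carries no new cost and identifies the structure claimed in the corollary with the general one of Theorem~\ref{thm:3.7}.

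Next I would establish, for a fixed normalized quadruple $(W,X,Y,Z)$, the equivalence of the two systems $C1)$--$C19)$ and $C1'')$--$C19'')$ under the standing hypotheses. Running the left normal identity through each $Ci)$ separates them into types. The conditions carrying a left-normal pairing of the shape $\sum 1_{B\tau}\otimes a_\tau$, namely $C1)$, $C2)$, $C3)$, $C8)$, $C10)$, $C14)$ and $C15)$, lose that twist and shed the accompanying $1_{B\tau_i}$ factors, landing on the double-primed form at once. The conditions whose only $\tau$-occurrence is the right-normal pairing $\sum b_\tau\otimes 1_{A\tau}$, namely $C4)$, $C5)$ and $C6)$, are left untouched, since left normality concerns the $B$-unit and not the $A$-unit; and those with no unit--$\tau$ pairing at all, namely $C7)$, $C9)$, $C11)$, $C12)$, $C13)$, $C17)$, $C18)$ and $C19)$, reappear verbatim. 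The sole remaining identity, the second-slot comultiplicativity condition $C16)$, does not simplify through the $\tau$-identity; instead its double-primed form is reached by combining $C16)$ with the $W$-relations $C11)$, $C13)$, $C19)$, the smash-product identity $(B7.1)$, the right conormality of $\sigma$, and the normalizations $(N1)$--$(N4)$. As every step is an equality, the rewriting is reversible, so the two systems are equivalent for normalized elements.

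With these two reductions in hand the corollary is immediate. In the forward direction, a quasitriangular $\mathcal{R}$ yields factor elements satisfying $C1)$--$C19)$ by Theorem~\ref{thm:3.7}, hence $C1'')$--$C19'')$; conversely, normalized elements satisfying $C1'')$--$C19'')$ satisfy $C1)$--$C19)$, so Theorem~\ref{thm:3.7} reconstructs a quasitriangular structure, which by the first step is exactly the simplified $\alpha$. I expect the main obstacle to be purely combinatorial: tracking, condition by condition, which $\tau$-copies are of left-normal type and therefore vanish versus which are of right-normal type and must be preserved, while keeping the interlaced $\sigma$-twists and $\epsilon_B$-contractions correctly aligned, and in particular verifying the consistency of the lone exceptional case $C16)$. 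Since this amounts to substituting one clean identity, supplemented by the already-available smash relations and normalizations, it requires no new construction or estimate.
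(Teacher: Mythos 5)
Your overall strategy coincides with the paper's: the printed proof of Corollary~\ref{thm:4.2} is literally the one-line specialization of Theorem~\ref{thm:3.7} via the left normal identity $\sum 1_{B\tau}\otimes a_\tau=1_B\otimes a$, and your sorting of $C1)$--$C19)$ into those that carry a left-normal pairing (and hence collapse), those whose only unit--$\tau$ pairing is of right-normal type, and those with no such pairing is accurate condition by condition.

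The one step I would not accept as written is your treatment of $C16)$. You are right that it is the exceptional case --- it contains no $\tau$ at all, yet $C16'')$ is not literally $C16)$ --- but the ingredients you cite ($C11)$, $C13)$, $C19)$, $(B7.1)$, right conormality of $\sigma$ and the normalizations) do not suffice: none of them lets you commute $\tensor{\bar{X}}{^1}$ past $X^1$ in the first slot or interchange the roles of $X^2$ and $\tensor{\bar{X}}{^2}$ in the last two, which is exactly the difference between the right-hand sides $\sum X^1\tensor{\bar{X}}{^1_\sigma}\epsilon_B(\tensor{W}{^1_\sigma})\otimes\tensor{\bar{X}}{^2}\otimes X^2W^2$ and $\sum X^1\tensor{\bar{X}}{^1}\otimes X^2\otimes\tensor{\bar{X}}{^2}$. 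The identity that does the job is $C2'')$: applying $\sum X^1a_\sigma\epsilon_B(\tensor{W}{^1_\sigma})\otimes X^2W^2=\sum aX^1\otimes X^2$ with $a=\tensor{\bar{X}}{^1}$, carrying $\tensor{\bar{X}}{^2}$ along in the middle slot, turns the right-hand side of $C16)$ into $\sum \tensor{\bar{X}}{^1}X^1\otimes\tensor{\bar{X}}{^2}\otimes X^2$, which is $C16'')$ after renaming the two copies of $X$. Since $C2'')$ is itself an equality already in the double-primed list, the rewriting remains reversible and the equivalence of the two systems for normalized elements is unaffected. With that substitution your argument is complete and agrees with the paper's.
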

	\begin{proof}
		Using $\sum1_{B\tau}\otimes a_\tau=1_B\otimes a$ in Theorem \ref{thm:3.7}.
	\end{proof}
	
	When $A$ and $B$ are Hopf algebras such that the smash biproduct bialgebra $A{_\tau\times_\sigma}B$ is also a Hopf algebra, this corollary is still true and it's equivalent to \cite[Theorem 2.7]{Z&W&J}. Let $A{_\tau\times_\sigma}B$ be a smash biproduct Hopf algebra for which $\sigma$ is right conormal and $\tau$ is left normal. They are exactly the bicrossproduct Hopf algebras (cf. \cite{Ma2}) by setting
	\begin{align*}
		\triangleright:B\otimes A\rightarrow A\quad&(b\otimes a\mapsto b\triangleright a=\sum a_{\sigma}\epsilon_B(b_{\sigma})),\\
		\rho:B\rightarrow B\otimes A\quad&(b\mapsto\sum b_{[0]}\otimes b_{[1]}=\sum b_{\tau}\otimes 1_{A\tau}).
	\end{align*}
	Then $A$ is a left $B$-module and $B$ is a right $A$-comodule. Moreover $A$ is an algebra in $_B\mathcal{M}$	and $B$ is a coalgebra in $\mathcal{M}^A$ and the conditions for a bicrossproduct Hopf algebra hold. The resulting bicrossproduct Hopf algebra $A{_{\triangleright}{\triangleright\hspace{-0.1em}\triangleleft}^\rho}B$ coincides with $A{_\tau\times_\sigma}B$.
	
	On the other hand, for a bicrossproduct Hopf algebra $A{_{\triangleright}{\triangleright\hspace{-0.1em}\triangleleft}^\rho}B$ of $A$ and $B$, by setting
	\begin{align*}
		&\sigma:B\otimes A\rightarrow A\otimes B\quad (b\otimes a\mapsto\sum b_{(1)}\triangleright a\otimes b_{(2)}),\\
		&\tau:A\otimes B\rightarrow B\otimes A\quad(a\otimes b\mapsto\sum b_{[0]}\otimes ab_{[1]}),
	\end{align*}
	we have a smash biproduct $A{_\tau\times_\sigma}B$ which coincides with the bicrossproduct. Thus these smash biproduct Hopf algebras satisfying both the left normal and right conormal conditions are exactly bicrossproduct Hopf algebras.
	
	The identities $C1'')-C19'')$ are also be classified into several groups in \cite{Z&W&J} and the reconstruction formula above becomes:
	\begin{align*}
		&\sum Z^1{\bar{X}}^1X^1\otimes W^1Y^1\otimes Z^2\tensor{Y}{^2_\sigma}\epsilon_B(\tensor{\bar{X}}{^2_{\sigma}})\otimes X^2W^2\\
		=&\sum Z^1{\bar{X}}^1X^1\otimes W^1Y^1\otimes Z^2(\tensor{\bar{X}}{^2}\triangleright \tensor{Y}{^2})\otimes X^2W^2.
	\end{align*}
	Hence Corollary \ref{thm:4.2} above generalizes \cite[Theorem 2.7]{Z&W&J} to bicrossproduct bialgebras.

	Another case is \cite{J}. The smash biproduct Hopf algebras studied in \cite{J} satisfy the conditions that $\sigma$ is right conormal and $\tau$ is left normal and right normal. Thus they can be treated as those in Section \ref{Sec:4.1} or in Section \ref{Sec:4.2} and the reconstruction formula will be more concise in form.

	\subsection{When $\sigma$ is left conormal}\label{Sec:4.3}
	Let $A{_\tau\times_\sigma}B$ be a smash biproduct bialgebra for which $\sigma$ is right conormal and additionally $\sigma$ is left conormal. Then $A$ and $B$ are both quotient-bialgebras of $A{_\tau\times_\sigma}B$.
	
	By applying $id_A\otimes\epsilon_B\otimes\epsilon_A\otimes id_B$ on both sides of $(B7)$, we have $\sum a_{\sigma}\otimes b_{\sigma}=a\otimes b$ for $\forall a\in A$, $b\in B$. Since $\sigma:B\otimes A\rightarrow A\otimes B$ is just the tensor flipping, these bialgebras shall be taken as the dual of the double crossproduct bialgebras (which can be not if $A$ or $B$ is infinite-dimensional).
	
	Since $\sigma$ is left conormal: $\sum a_{\sigma}\epsilon_B(b_{\sigma})=a\epsilon_B(b)$, the formula in Proposition \ref{thm:3.4} becomes:
	\begin{align*}
		&\sum \tensor{Z}{^1_{\tau_1\tau_2}}\tensor{\bar{X}}{^1_{\tau_3}}X^1\otimes W^1Y^1\otimes Z^2\epsilon_B(1_{B\tau_1\sigma_2}\tensor{\bar{X}}{^2_{\sigma_1}})\tensor{Y}{^2_{\sigma_1\sigma_2}}\otimes1_{B\tau_2}1_{B\tau_3}X^2W^2\\
		=&\sum \tensor{Z}{^1_{\tau_2}}\tensor{\bar{X}}{^1_{\tau_3}}X^1\otimes W^1Y^1\otimes Z^2\epsilon_B(\tensor{\bar{X}}{^2})\tensor{Y}{^2}\otimes1_{B\tau_2}1_{B\tau_3}X^2W^2\\
		=&\sum \tensor{Z}{^1_{\tau_2}}1_{A{\tau_3}}X^1\otimes W^1Y^1\otimes Z^2\tensor{Y}{^2}\otimes1_{B\tau_2}1_{B\tau_3}X^2W^2\\
		=&\sum \tensor{Z}{^1_\tau}X^1\otimes W^1Y^1\otimes Z^2\tensor{Y}{^2}\otimes1_{B\tau }X^2W^2.
	\end{align*}
	The identities in Proposition \ref{thm:3.5} become:
	\begin{align*}
		C1''')\quad&\sum \tensor{Z}{^1}a_{(1)}\otimes Z^2a_{(2)}=\sum a_{(2)}\tensor{Z}{^1}\otimes a_{(1)}\tensor{Z}{^2};\\
		C2''')\quad&\sum X^1a\otimes X^2=\sum a_\tau X^1\otimes1_{B\tau}\tensor{X}{^2};\\
		C3''')\quad&\sum Y^11_{B\tau}\otimes Y^2a_{\tau}=\sum Y^1\otimes aY^2;\\
		C4''')\quad&\sum Z^1\otimes Z^2\epsilon_B(b)\equiv\sum Z^1\otimes Z^2\epsilon_B(b);\\
		C5''')\quad&\sum X^1\otimes X^2b=\sum1_{A\tau}\tensor{X}{^1}\otimes b_{\tau}X^2;\\
		C6''')\quad&\sum Y^1b_\tau\otimes Y^21_{A\tau}=\sum bY^1\otimes \tensor{Y}{^2};\\
		C7''')\quad&\sum W^1b_{(1)}\otimes W^2b_{(2)}=\sum b_{(2)}W^1\otimes b_{(1)}W^2;\\
		C8''')\quad&\sum \tensor{Z}{^1_{(1)}}\otimes \tensor{Z}{^1_{(2)}}\otimes Z^2=\sum \tensor{Z}{^1}\otimes\bar{Z}^1\otimes Z^2\tensor{\bar{Z}}{^2};\\
		C9''')\quad&\sum \tensor{X}{^1_{(1)}}\otimes \tensor{X}{^1_{(2)}}\otimes X^2=\sum X^1\otimes{\bar{X}}^1\otimes X^2{\bar{X}}^2;\\
		C10''')\quad&\sum \tensor{Y}{^1_{\tau}}\otimes \tensor{Z}{^1_{\tau}}\otimes Z^2\tensor{Y}{^2}=\sum Y^1\otimes Z^1\otimes Y^2\tensor{Z}{^2};\\
		C11''')\quad&\sum \tensor{W}{^1_\tau}\otimes \tensor{X}{^1_\tau}\otimes X^2W^2=\sum W^1\otimes X^1\otimes W^2X^2;\\
		C12''')\quad&\sum \tensor{Y}{^1_{(1)}}\otimes \tensor{Y}{^1_{(2)}}\otimes Y^2=\sum Y^1\otimes {\bar{Y}}^1\otimes Y^2\tensor{\bar{Y}}{^2};\\
		C13''')\quad&\sum \tensor{W}{^1_{(1)}}\otimes \tensor{W}{^1_{(2)}}\otimes W^2=\sum W^1\otimes{\bar{W}}^1\otimes W^2{\bar{W}}^2;\\
		C14''')\quad&\sum  Z^1\otimes\tensor{Z}{^2_{(1)}}\otimes \tensor{Z}{^2_{(2)}}=\sum \tensor{Z}{^1}\tensor{\bar{Z}}{^1}\otimes{\bar{Z}}^2\otimes Z^2;\\
		C15''')\quad&\sum \tensor{Z}{^1_{\tau_1}} X^1\otimes (1_{B\tau_1}\tensor{X}{^2})_{\tau_2}\otimes\tensor{Z}{^2_{\tau_2}}=\sum \tensor{Z}{^1}\tensor{X}{^1}\otimes X^2\otimes Z^2;\\
		C16''')\quad&\sum X^1\otimes\tensor{X}{^2_{(1)}}\otimes \tensor{X}{^2_{(2)}}=\sum X^1\tensor{\bar{X}}{^1} \otimes{\bar{X}}^2\otimes X^2;\\
		C17''')\quad&\sum  Y^1\otimes\tensor{Y}{^2_{(1)}}\otimes \tensor{Y}{^2_{(2)}}=\sum Y^1{\bar{Y}}^1\otimes {\bar{Y}}^2\otimes Y^2;\\
		C18''')\quad&\sum W^1Y^1\otimes\tensor{W}{^2_\tau}\otimes \tensor{Y}{^2_\tau}=\sum Y^1W^1 \otimes W^2\otimes Y^2;\\
		C19''')\quad&\sum  W^1\otimes\tensor{W}{^2_{(1)}}\otimes \tensor{W}{^2_{(2)}}=\sum W^1{\bar{W}}^1 \otimes {\bar{W}}^2\otimes W^2.
	\end{align*}
	
	Here $C4''')$ is an identity. But for consistence in form, we still keep it.

	Then Theorem \ref{thm:3.7} gives the following:
	\begin{coro}\label{thm:4.3}
		Let $A{_\tau\times_\sigma}B$ be a smash biproduct bialgebra. Under conditions that $\sigma$ is left conormal and right conormal, $A{_\tau\times_\sigma}B$ admits a quasitriangular structure if and only if there exist normalized elements $W=\sum W^1\otimes W^2\in B\otimes B$, $X=\sum X^1\otimes X^2\in A\otimes B$, $Y=\sum Y^1\otimes Y^2\in B\otimes A$ and $Z=\sum Z^1\otimes Z^2\in A\otimes A$ satisfying $C1''')-C19''')$ above. In this case, the quasitriangular structure is given as $\alpha=\sum \tensor{Z}{^1_\tau}X^1\otimes W^1Y^1\otimes Z^2\tensor{Y}{^2}\otimes1_{B\tau }X^2W^2$.
	\end{coro}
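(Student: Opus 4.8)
The plan is to obtain Corollary~\ref{thm:4.3} as a direct specialization of Theorem~\ref{thm:3.7}, in the same spirit as Corollaries~\ref{thm:4.1} and~\ref{thm:4.2}: one feeds the extra hypothesis that $\sigma$ is left conormal into the general equivalence and records how both the reconstruction formula and the identity list simplify. The starting observation is that the two conormal conditions together force $\sigma$ to be the tensor flip. Indeed, applying $id_A\otimes\epsilon_B\otimes\epsilon_A\otimes id_B$ to $(B7)$ of Proposition~\ref{thm:2.4} and using that $\sigma$ is now both left and right conormal yields $\sum a_\sigma\otimes b_\sigma=a\otimes b$ for all $a\in A$, $b\in B$. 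This identity, which says that every $\sigma$-subscript may be erased, is the engine behind all the reductions below.

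First I would simplify the reconstruction formula of Theorem~\ref{thm:3.7}. Erasing the $\sigma$-subscripts turns $\tensor{Y}{^2_{\sigma_1\sigma_2}}$ into $Y^2$, $\tensor{\bar X}{^2_{\sigma_1}}$ into $\bar X^2$ and $1_{B\tau_1\sigma_2}$ into $1_{B\tau_1}$, so the third tensor factor becomes $Z^2 Y^2\epsilon_B(1_{B\tau_1}\bar X^2)$. Since $\epsilon_B$ is an algebra map on the quotient-bialgebra $B$ (Remark~\ref{Rmk:3.3}), the smash-coproduct counit identity $\sum\epsilon_B(1_{B\tau_1})\tensor{Z}{^1_{\tau_1\tau_2}}=\tensor{Z}{^1_{\tau_2}}$ removes the copy $\tau_1$; next $(N2)$ of Proposition~\ref{thm:3.1} gives $\sum\tensor{\bar X}{^1_{\tau_3}}\epsilon_B(\bar X^2)=1_{A\tau_3}$; and finally $(B3)$ lets me replace $\sum 1_{B\tau_3}\otimes 1_{A\tau_3}$ by $1_B\otimes 1_A$, absorbing both. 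What survives is exactly $\alpha=\sum\tensor{Z}{^1_\tau}X^1\otimes W^1Y^1\otimes Z^2\tensor{Y}{^2}\otimes1_{B\tau}X^2W^2$, the claimed form.

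Second, I would run the same erasure through each of $C1)$--$C19)$ of Proposition~\ref{thm:3.5}. In every identity each $\sigma_i$ drops out and each factor $\epsilon_B((\cdots)_\sigma)$ loses its subscript; the $\tau$-copies feeding those counit factors then collapse by the smash-coproduct counit identity, and the bare counits $\epsilon_B(X^2)$, $\epsilon_B(Y^1)$ and the like contract the corresponding factor elements to units through the normalization conditions $(N2)$--$(N4)$. This transcribes the list $C1)$--$C19)$ into $C1''')$--$C19''')$ one by one, with $C4''')$ degenerating into the tautology noted after the list. Conversely, because $\sigma$ is the flip all these reductions are equalities, so normalized $W,X,Y,Z$ satisfying $C1''')$--$C19''')$ satisfy $C1)$--$C19)$ as well; Theorem~\ref{thm:3.7} then applies and produces $\alpha$ in its reduced form as a quasitriangular structure. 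This yields both directions of the equivalence.

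I do not expect any conceptual obstacle: the whole argument is the single substitution $\sigma\mapsto\text{flip}$ inside Theorem~\ref{thm:3.7}. The only point demanding care is the bookkeeping of indices in the longer identities $C1)$, $C14)$ and $C15)$, where one must check that no residual $\sigma_i$ is overlooked and that each $\tau$-copy paired with an erased $\sigma$-factor is correctly removed by the counit identity; once this is verified the reductions are routine, and the proof compresses to the one-line invocation ``using $\sum a_\sigma\otimes b_\sigma=a\otimes b$ in Theorem~\ref{thm:3.7}.''
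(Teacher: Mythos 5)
Your proposal is correct and follows essentially the same route as the paper: the text preceding Corollary~\ref{thm:4.3} derives $\sum a_\sigma\otimes b_\sigma=a\otimes b$ from $(B7)$, simplifies the reconstruction formula in exactly the three steps you describe (kill $\tau_1$ via the counit identity, contract $\bar X$ via $(N2)$, absorb $\tau_3$ via $(B3)$), rewrites $C1)$--$C19)$ as $C1''')$--$C19''')$, and then the proof is the one-line invocation of Theorem~\ref{thm:3.7}. No discrepancy to report.
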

	\begin{proof}
		Using $\sum a_{\sigma}\epsilon_B(b_{\sigma})=a\epsilon_B(b)$ in Theorem \ref{thm:3.7}.
	\end{proof}
	
	When $A$ and $B$ are Hopf algebras such that the smash biproduct bialgebra $A{_\tau\times_\sigma}B$ is also a Hopf algebra, this corollary is still true and it's \cite[Theorem 3.1]{J2}. By the conditions that $\sigma$ is left conormal and right conormal, $\sigma$ is just the tensor flipping. Thus the smash biproduct Hopf algebra $A{_\tau\times_\sigma}B$ is exactly the Hopf algebra $A{_{\omega}{\triangleright\hspace{-0.1em}\triangleleft}}B$ studied in \cite{J2}. And we have
	\begin{align*}
		&\sum\tensor{Z}{^1_\tau}X^1\otimes W^1Y^1\otimes Z^2\tensor{Y}{^2}\otimes1_{B\tau}X^2W^2\\
		\stackrel{C2''')}{=}&\sum X^1\tensor{Z}{^1}\otimes W^1Y^1\otimes Z^2\tensor{Y}{^2}\otimes X^2W^2.
	\end{align*}
	which is the formula in \cite{J2}.
	
	\begin{rmk}
		The smash biproduct bialgebras considered in Corollary \ref{thm:4.1}, \ref{thm:4.2} and \ref{thm:4.3} are distinct. Thus these results are also distinct factorization theories. On the other hand, there do exist smash biproduct bialgebras for which only $\sigma$ is right conormal. Thus our theory (mainly Theorem \ref{thm:3.7}) is not just the “union” of  Corollary \ref{thm:4.1}, \ref{thm:4.2} and \ref{thm:4.3}. Our theory generalizes them and thus generalizes the results of \cite{J,J2,M&W,Z&W&J,Z&Z}.
	\end{rmk}
	
	\section*{Acknowledgement}
	
	The author would like to thank Professor Shenglin Zhu for his useful supports and discussions.

	\end{document}